\newtheorem{teo}{Theorem}
\newtheorem{lemma}{Lemma}
\newtheorem{prop}{Proposition}
\newtheorem{cor}{Corollary}
\theoremstyle{definition}
\newtheorem{remark}{Remark}
\theoremstyle{definition}
\theoremstyle{definition}
\newtheorem{example}{Example}
\theoremstyle{remark}
\newcommand{\gracias}{\noindent\textbf{Acknowledgement.}\ }
\DeclareMathOperator{\nr}{\mathbb{N}}
\DeclareMathOperator{\re}{\mathbb{R}}
\DeclareMathOperator{\er}{\mathbf{E}}
\DeclareMathOperator{\pr}{\mathbf{P}}
\DeclareMathOperator{\p}{\mbox{\rm I\hspace{-0.02in}P}}
\DeclareMathOperator{\e}{\mbox{\rm I\hspace{-0.02in}E}}
\title{On the asymptotic behaviour of increasing self-similar Markov processes}
\author{Maria Emilia Caballero
A.\thanks{Instituto de Matem\'aticas, UNAM. E-mail
marie@matem.unam.mx} \ V{\'\i }ctor Rivero\thanks{Centro de
Investigaci\'on en Matem\'aticas (CIMAT A.C.) E-mail:
riverovm@gmail.com}}
\date{\today}
\begin{document}
\maketitle
\begin{abstract}
It has been proved by Bertoin and Caballero \cite{BC2002} that a $1/\alpha$-increasing
self-similar Markov process $X$ is such that $t^{-1/\alpha}X(t)$
converges weakly, as $t\to\infty,$ to a degenerated r.v. whenever
the subordinator associated to it via Lamperti's transformation has
infinite mean. Here we prove that $\log(X(t)/t^{1/\alpha})/\log(t)$ converges in law to a
non-degenerated r.v. if and only if the associated subordinator has Laplace exponent that varies regularly at $0.$ 
 Moreover, we show that
$\liminf_{t\to\infty}\log(X(t))/\log(t)=1/\alpha,$ a.s. and provide
an integral test for the upper functions of $\{\log(X(t)), t\geq 0\}.$ Furthermore, results concerning the rate of growth of the random clock appearing in Lamperti's transformation are obtained. In particular, these allow us to establish estimates for the left tail of some exponential functionals of subordinators. Finally, some of the implications of these results in the theory of self-similar fragmentations are discussed.    
\end{abstract}
\noindent\textbf{Keywords}:  Dynkin-Lamperti Theorem, Lamperti's transformation, law of iterated logarithm, subordinators, weak limit theorem.
\begin{section}{Introduction}
Let $X=\{X(t), t\geq 0\}$ be a positive self-similar Markov process
with c\`adl\`ag and increasing paths, viz. $X$ is a $]0,\infty[$
valued strong Markov process that fulfills the scaling property:
there exists an $\alpha>0$ such that for every $c>0$
$$\left(\{cX(t c^{-\alpha}), t\geq 0\}, \p_x\right)\stackrel{\text{Law}}{=}\left(\{X(t), t\geq
0\}, \p_{cx}\right),\qquad x\in]0,\infty[,$$ where $\p_y$ denotes
the law of the process $X$ with starting point $y>0.$ We will say that
$X$ is an increasing $1/\alpha$-pssMp. Examples of this class of processes are: stable subordinators; in the theory of extremes, the extremal process with $Q$-function given by $ax^{-b},$ for $x>0$ and $\infty$ in other case, for some indices $a,b>0$ (for a more precise description of the latter example see \cite{rivero2003} Section 5); in the theory of self-similar fragmentations, the reciprocal of the process of a tagged fragment, see \cite{bertoinFC} Section 3.3.

It is well known that by means of a transformation due to
Lamperti~\cite{lamperti2} any increasing positive self-similar
Markov processes can be transformed into a subordinator and
vice-versa. By a subordinator we mean a c\`adl\`ag real valued
process with independent and stationary increments, that is, a L\'evy process
with increasing paths. To be more precise about Lamperti's
transformation, given an increasing $1/\alpha$-pssMp $X$ we define a
new process $\xi$ by
$$\xi_t=\log\left(\frac{X(\gamma_t)}{X(0)}\right),\qquad t\geq 0,$$ where
$\{\gamma_t, t\geq 0\}$ denotes the inverse of the additive functional
$$\int^{t}_0X^{-\alpha}_s\mathrm{d}s,\quad t\geq 0.$$ The
process $\xi=\{\xi_t, t\geq 0\}$ defined this way is a subordinator
started from $0$, and we denote by $\pr$ its law. Reciprocally,
given a subordinator $\xi$ and $\alpha>0,$ the process constructed
in the following way is an increasing $1/\alpha$-pssMp. For $x>0,$ we
denote by $\p_x$ the law of the process
$$x\exp\{\xi_{\tau(t/x^{\alpha})}\},\qquad t \geq 0,$$ where $\{\tau(t), t\geq
0\}$ is the inverse of the additive functional
\begin{equation}\label{eq:defA}
C_t:=\int^{t}_0\exp\{\alpha\xi_s\}\mathrm{d}s,\qquad t\geq 0.
\end{equation}
So for any $x>0,$ $\p_x,$ is the law of an $1/\alpha$-pssMp started
from $x>0.$ We will refer to any of these transformations as
Lamperti's transformation.

In a recent work Bertoin and Caballero~\cite{BC2002} studied the
problem of existence of entrance laws at $0+$ for increasing pssMp.
In that work Bertoin and Caballero established that if the
subordinator $(\xi,\pr)$ associated to $(X,\p)$ via Lamperti's
transformation has finite mean $m:=\er(\xi_1)<\infty,$ then there
exists a non-degenerated probability measure $\p_{0+}$ on the space
of paths that are right continuous and left limited which is the
limit in the sense of finite dimensional laws of $\p_x$ as $x\to
0+.$ Using the scaling and Markov properties it is easy to see that
the latter result is equivalent to the weak convergence of random
variables
\begin{equation}\label{eq:weakconvergence}
t^{-1/\alpha}X(t)\xrightarrow[t\to\infty]{\text{Law}} Z,
\end{equation}
where $X$ is started at $1$ and $Z$ is a non-degenerated random
variable. The law of $Z$ will be denoted by $\mu,$ and it is the
probability measure defined by
\begin{equation}\label{eq:limitlaw}
\mu(f):=\e_{0+}\left(f(X(1))\right)=\frac{1}{\alpha
m}\er\left(f\left(\left(\frac{1}{I}\right)^{1/\alpha}\right)\frac{1}{I}\right),
\end{equation} for any measurable function $f:\re^+\to\re^+;$ where $I$ is the exponential functional
$$I:=\int^{\infty}_0\exp\{-\alpha\xi_s\}\mathrm{d}s,$$ associated to
the subordinator $\xi,$ see~\cite{BC2002}. The following result
complements that of Bertoin and Caballero.
\begin{prop}\label{prop:1}
Let $\{X(t), t\geq 0\}$ be a positive $1/\alpha$-self-similar Markov
process with increasing paths. Assume that the subordinator $\xi,$
associated to $X$ via Lamperti's transformation has finite mean,
$m=\er(\xi_1)<\infty.$ Then
$$\frac{1}{\log(t)}\int^{t}_0f(s^{-1/\alpha}X(s))\frac{\mathrm{d}s}{s}\xrightarrow[t\to\infty]{}\mu(f),\quad
\p_{0+}-\text{a.s.}$$ for every function $f\in L^{1}(\mu).$
Furthermore,
$$\frac{\log\left(X(t)\right)}{\log(t)}\xrightarrow[t\to\infty]{} 1/\alpha,\qquad \p_1-\text{a.s.}$$
\end{prop}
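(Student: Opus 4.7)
The plan is to treat the two assertions separately. The second (pathwise a.s.\ growth rate of $\log X$ under $\p_1$) follows from Lamperti's representation together with the strong law for the underlying subordinator; the first (an ergodic Ces\`aro average under $\p_{0+}$) will come from Birkhoff's theorem applied to a stationary Markov process derived from $X$ by a logarithmic time change.

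For the second assertion, I start from Lamperti's representation $\log X(t)=\xi_{\tau(t)}$ under $\p_1$, with $\tau$ the right inverse of $C$ defined in \eqref{eq:defA}. The hypothesis $m=\er(\xi_1)<\infty$ supplies the strong law $\xi_u/u\to m$, $\pr$-a.s. Because $\xi$ is nondecreasing, one has the two-sided sandwich
\[
\varepsilon u\, e^{\alpha\xi_{(1-\varepsilon)u}} \;\leq\; C_u \;\leq\; u\, e^{\alpha\xi_u},\qquad 0<\varepsilon<1,
\]
which immediately yields $u^{-1}\log C_u\to\alpha m$ a.s.\ after letting $\varepsilon\to 0$. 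Inverting this asymptotic gives $\tau(t)/\log t\to(\alpha m)^{-1}$ a.s., and substituting into the product $\log X(t)/\log t=(\xi_{\tau(t)}/\tau(t))\cdot(\tau(t)/\log t)$ produces the limit $1/\alpha$.

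For the first assertion, introduce the log-time process $Y(u):=e^{-u/\alpha}X(e^u)$, $u\in\re$. The scaling property of $\p_{0+}$, inherited from that of $\p_x$ by taking $x\to 0+$, shows that $(Y(u))_{u\in\re}$ is stationary with marginal $\mu$; it is also Markov, being a deterministic time change of $X$. Combining the Markov and scaling properties one computes
\[
Y(u+h)\,\big|\,Y(u)=y \;\stackrel{\mathrm{law}}{=}\; (1-e^{-h})^{1/\alpha}\,T_h^{-1/\alpha}X(T_h),\qquad T_h:=(e^h-1)\,y^{-\alpha},
\]
with $X$ on the right-hand side taken under $\p_1$. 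As $h\to\infty$ the prefactor tends to $1$ and $T_h\to\infty$; by the Bertoin--Caballero limit \eqref{eq:weakconvergence}, $T_h^{-1/\alpha}X(T_h)\Rightarrow Z\sim\mu$, so the transition $P_h(y,\cdot)$ converges weakly to $\mu$ for each $y$.

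This weak convergence of transitions to the invariant distribution, combined with the Markov property and bounded convergence, gives the mixing identity $\e_{0+}[f(Y(0))g(Y(h))]\to\mu(f)\mu(g)$ for bounded continuous $f,g$, and hence the ergodicity of $Y$. Birkhoff's theorem for the continuous-time stationary ergodic process $Y$ then yields $T^{-1}\int_0^T f(Y(u))\,du\to\mu(f)$ $\p_{0+}$-a.s.\ for every $f\in L^1(\mu)$, and the change of variables $s=e^u$, $T=\log t$ rewrites this as the announced Ces\`aro limit (effectively with lower endpoint $1$, any fixed compact neighbourhood of $0$ contributing negligibly after division by $\log t$). The main obstacle is the ergodicity step; the remainder is Lamperti bookkeeping and the classical strong law, and ergodicity is itself essentially forced by the Bertoin--Caballero convergence, which pins down $\mu$ as the unique weak limit of the semigroup $P_h(y,\cdot)$.
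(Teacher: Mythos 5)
Your proof is correct, and it takes a genuinely different route from the paper's on both assertions.

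For the second assertion, the paper derives $\tau(t)/\log t\to 1/(\alpha m)$ as a \emph{corollary} of the first assertion: it applies the ergodic theorem with $f(x)=x^{-\alpha}$ under $\p_{0+}$, and then transfers the a.s.\ convergence from $\p_{0+}$ to $\p_1$ by a Markov-property and scaling argument. You instead prove it directly under $\pr$ from the two-sided sandwich $\varepsilon u\,e^{\alpha\xi_{(1-\varepsilon)u}}\leq C_u\leq u\,e^{\alpha\xi_u}$, which together with the strong law $\xi_u/u\to m$ gives $u^{-1}\log C_u\to\alpha m$ and hence the rate of $\tau$ by inversion. This is more elementary and entirely self-contained; in particular it never needs the entrance law $\p_{0+}$, so the two assertions are logically independent in your treatment, whereas the paper makes the second assertion depend on the first.

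For the first assertion, the paper and you both pass to the log-time Ornstein--Uhlenbeck-type process $U_u=e^{-u/\alpha}X(e^u)$, stationary under $\p_{0+}$ with marginal $\mu$, and invoke Birkhoff's theorem; the difference is in how ergodicity is supplied. The paper cites Carmona--Petit--Yor and Rivero for positive recurrence of $U$ and then applies the ergodic theorem without further justification. You instead establish ergodicity internally: the explicit computation $P_h(y,\cdot)\stackrel{\text{law}}{=}(1-e^{-h})^{1/\alpha}T_h^{-1/\alpha}X(T_h)$ under $\p_1$ with $T_h=(e^h-1)y^{-\alpha}\to\infty$ reduces the convergence of transitions to the Bertoin--Caballero weak limit, and then the standard Markov + bounded convergence argument gives $\e_{0+}[f(U_0)g(U_h)]\to\mu(f)\mu(g)$, hence mixing and ergodicity. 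This is a cleaner logical closure: it shows that the very hypothesis driving the Bertoin--Caballero theorem (finite mean) already forces the ergodicity needed for the ergodic theorem, rather than importing it from elsewhere. One small point worth spelling out if you write this up formally: the correlation decay you obtain for bounded continuous $f,g$ should be upgraded (by the usual approximation/Markov-tail argument) to triviality of the full shift-invariant $\sigma$-field before invoking Birkhoff; for a stationary Markov process with $P_h(y,\cdot)\Rightarrow\mu$ for $\mu$-a.e.\ $y$ this is classical, but it deserves a sentence.
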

In fact, the results of the previous proposition are not new, the first assertion can be obtained as a consequence of an ergodic
theorem for self-similar processes due to Cs{\'a}ki and
F{\"o}ldes~\cite{csakifoldes}, and the second assertion has been obtained in \cite{beasymp}. However, we provide a proof of these results for ease of reference. 

In the work~\cite{BC2002} the authors also proved that if the
subordinator $(\xi,\pr)$ has infinite mean then the convergence in
law in (\ref{eq:weakconvergence}) still holds but $Z$ is a
degenerated random variable equal to $\infty$ a.s. The main purpose
of this work is study in this setting the rate at which
$t^{-1/\alpha}X(t)$ tends to infinity as the time growths.

Observe that the asymptotic behaviour of $(X,\p)$ at large times is
closely related to the large jumps of it, because it is so for the
subordinator $(\xi,\pr).$ So, for our ends it will be important to
have some information about the large jumps of $(\xi,\pr)$ or
equivalently on those of $(X,\p).$ Such information will be provided
by the following assumption. Let $\phi:\re^+\to\re^+$ be the Laplace
exponent of $(\xi,\pr),$ viz.
$$\phi(\lambda):=-\log\left(\er(e^{-\lambda\xi_1})\right)=d\lambda+\int_{]0,\infty[}(1-e^{-\lambda x})\Pi(\mathrm{d}x),\qquad \lambda\geq 0,$$
where $d\geq 0$ and $\Pi$ is a measure on $]0,\infty[$ such that
$\int(1\wedge x)\Pi(\mathrm{d}x)<\infty,$ they are called the drift
term and L\'evy measure of $\xi,$ respectively. We will assume that
$\phi$ is regularly varying at $0,$ i.e.
$$\lim_{\lambda\to 0}\frac{\phi(c\lambda)}{\phi(\lambda)}=c^{\beta},\qquad c>0,$$
for some $\beta\in[0,1],$ which will be called the index of regular
variation of $\phi.$ In the case where $\beta=0,$ it is said that the function $\phi$ is slowly varying. It is known that $\phi$ is regularly varying at
$0$ with an index $\beta\in]0,1[$ if and only if the right tail of
the L\'evy measure $\Pi$ is regularly varying with index $-\beta$, viz.
\begin{equation}\label{taillevy}\lim_{x\to\infty}\frac{\Pi]cx,\infty[}{\Pi]x,\infty[}=c^{-\beta},\qquad
c>0.\end{equation} Well known examples of subordinators whose Laplace exponent is regularly varying are the stable subordinators  and  the Gamma subordinator. A quite rich but less known class of su\-bor\-di\-na\-tors whose Laplace exponent is regularly varying at $0$ is that one of tempered stable subordinators, see \cite{rosinski2007} for background on tempered stable laws. In this case, the drift term is equal to $0$, and the L\'evy measure $\Pi_{\delta}$ has the form $\Pi_{\delta}(\mathrm{d}x)=x^{-\delta-1}q(x)\mathrm{d}x,$  $x>0,$ where $\delta\in]0,1[$ and $q:\re^+\to\re^+$ is a completely monotone function such that $\int^1_{0}x^{-\delta}q(x)\mathrm{d}x<\infty.$ By L'H\^opital rule's, for $\Pi_{\delta}$ to be such that that the condition (\ref{taillevy}) is satisfied it is necessary and sufficient that $q$ be regularly varying at infinity with index $-\lambda$ and such that $0< \lambda+\delta<1.$ By the theory of completely monotone functions it is known that there exists a measure $\mu,$ over $[0,\infty[$ such that $q$ can be represented as $q(x)=\int^\infty_{0}e^{-xy}\mu(\mathrm{d}y)$ for $x\geq 0.$ Owing to Karamata's Tauberian Theorem (Theorem 1.7.1 in \cite{BGT}) it follows that the condition (\ref{taillevy}) is equivalent to the regular variation at zero of the function $x\mapsto\mu[0,x],$ $x>0,$ with index $-\lambda$ for some $0\leq \lambda<1-\delta.$ 

We have all the elements to state our first main result.
\begin{teo}\label{th:1}
Let $\{X(t), t\geq 0\}$ be a positive $1/\alpha$-self-similar Markov
process with increasing paths. The following assertions are
equivalent:
\begin{itemize}
\item[(i)]The subordinator $\xi,$
associated to $X$ via Lamperti's transformation, has Laplace
exponent $\phi:\re^+\to\re^+,$ which is regularly varying at $0$
with an index $\beta\in[0,1].$
\item[(ii)] Under $\p_{1}$ the random variables
$\left\{\log(X(t)/t^{1/\alpha})/\log(t), t > 1\right\}$ converge weakly as $t\to\infty$ towards a r.v. $V.$
\item[(iii)] For any $x>0,$ under $\p_{x}$ the random variables
$\left\{\log(X(t)/t^{1/\alpha})/\log(t), t > 1\right\}$ converge weakly as $t\to\infty$ towards a r.v. $V.$ 
\end{itemize}
In this case, the law of $V$ is determined in terms of the value of $\beta$ as follows: $V=0$ a.s. if
$\beta=1;$ $V=\infty,$ a.s. if $\beta=0,$ and if $\beta\in ]0,1[,$
its law has a density given by
$$\frac{\alpha^{1-\beta}2^{\beta}\sin(\beta\pi)}{\pi}v^{-\beta}(2+\alpha v)^{-2}\mathrm{d}v,\qquad v>0.$$
\end{teo}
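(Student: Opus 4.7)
The first step of the plan is to reduce (iii) to (ii) via the scaling property of $X$: under $\p_x$ the process has the same law as $\{xX(t/x^\alpha), t\geq 0\}$ under $\p_1$, from which one deduces that the weak convergence of $\log(X(t)/t^{1/\alpha})/\log t$ is the same under both laws as $t\to\infty$. We therefore work under $\p_1$. Using Lamperti's representation $\log X(t)=\xi_{\tau(t)}$ and setting $y:=\log(t)/\alpha$, the quantity of interest becomes
\[V(t)=\frac{\xi_{\tau(t)}-y}{\alpha y}.\]

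The main technical step is to compare $\xi_{\tau(t)}$ with the first-passage value $\xi_{\mathcal{T}(y)}$, where $\mathcal{T}(y):=\inf\{s\geq 0:\xi_s>y\}$. On the event $\{\tau(t)\geq\mathcal{T}(y)\}$, the lower bound
\[t=C_{\tau(t)}\geq(\tau(t)-\mathcal{T}(y))\exp\{\alpha\xi_{\mathcal{T}(y)}\}\]
yields $\tau(t)-\mathcal{T}(y)\leq\exp\{-\alpha(\xi_{\mathcal{T}(y)}-y)\}\leq 1$; by the strong Markov property at $\mathcal{T}(y)$ the difference $\xi_{\tau(t)}-\xi_{\mathcal{T}(y)}$ is stochastically dominated by an independent copy of $\xi_1$, hence is $o(y)$ in probability. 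On the complementary event $\{\tau(t)<\mathcal{T}(y)\}$ one uses the representation $C_s=e^{\alpha\xi_s}\int_0^s e^{-\alpha(\xi_s-\xi_r)}\mathrm{d}r$ together with the fact that, under regular variation of $\phi$ at $0$, the quantity $\int_0^{\mathcal{T}(y)}e^{-\alpha(\xi_{\mathcal{T}(y)-}-\xi_r)}\mathrm{d}r$ grows only subexponentially in $y$, to conclude that $\p_1(\tau(t)<\mathcal{T}(y))\to 0$ as $t\to\infty$.

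With these comparisons in hand, the asymptotic law of $V(t)$ is governed by the normalized overshoot $(\xi_{\mathcal{T}(y)}-y)/y$ of $\xi$ above the deterministic level $y$. The Dynkin--Lamperti theorem for subordinators (Bertoin, \emph{L\'evy Processes}, Chapter III) asserts that, under (i), the joint distribution $(\xi_{\mathcal{T}(y)-}/y,\xi_{\mathcal{T}(y)}/y)$ converges weakly to an explicit limit, coinciding with its stable-$\beta$ counterpart; carrying out the change of variables $v=(\xi_{\mathcal{T}(y)}-y)/(\alpha y)$ and integrating out the undershoot variable yields the density in the statement. The degenerate cases $\beta=1$ and $\beta=0$ correspond to overshoots that are, respectively, negligible or dominant relative to $y$, and give $V=0$ or $V=\infty$ almost surely.

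For the converse implication (ii)$\Rightarrow$(i), the same comparison shows that weak convergence of $V(t)$ to a nondegenerate limit forces weak convergence of $(\xi_{\mathcal{T}(y)}-y)/y$, and the converse direction of the Dynkin--Lamperti theorem (Rogozin's theorem on the behaviour of overshoots) then gives regular variation of $\phi$ at $0$. The step I expect to be the most delicate is the density identification, since the nonlinear Lamperti time change couples the joint information coming from the undershoot and the overshoot, and it is not obvious a priori how the specific form $v^{-\beta}(2+\alpha v)^{-2}$ emerges from the Dynkin--Lamperti limit; the vanishing of $\p_1(\tau(t)<\mathcal{T}(y))$ is also nontrivial and requires sharp control of the exponential functional $\int_0^{\mathcal{T}(y)}e^{-\alpha(\xi_{\mathcal{T}(y)-}-\xi_r)}\mathrm{d}r$.
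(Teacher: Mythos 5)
Your route is genuinely different from the paper's. You compare $\xi_{\tau(t)}$ directly with the first-passage value $\xi_{\mathcal{T}(y)}$ of the subordinator over the deterministic level $y=\log(t)/\alpha$, whereas the paper works with the first-passage time $T_b$ of $X$ itself and exploits an \emph{exact} distributional identity (Lemma~\ref{lemma:bc1}, due to Bertoin--Caballero): under $\p_x$, $(T_b,X(T_b))$ has precisely the law of $\left(b^\alpha e^{-\alpha A_{\log(b/x)}}\int_0^{L(\log(b/x))}e^{-\alpha\xi_s}\mathrm{d}s,\ be^{R_{\log(b/x)}}\right)$. This identity is then combined with the sandwich $\p_1(T_{b(x)/x}<tx^{-\alpha})\le\p_1(xX(tx^{-\alpha})>b(x))\le\p_1(T_{b(x)/x}\le tx^{-\alpha})$, so the paper never needs to show that $\tau(t)$ is close to a first-passage time in any approximate, pathwise sense. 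Your plan buys a more transparent picture (the limit is the rescaled overshoot $O/\alpha$, which by a small beta-integral computation has the same law as $U/(\alpha(1-U))$, where $U$ is the Dynkin--Lamperti age limit), but at the cost of needing an approximation argument.

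The gap is in (ii)$\Rightarrow$(i). Your comparison hinges on $\p_1(\tau(t)<\mathcal{T}(y))\to 0$. Since $\tau(t)<\mathcal{T}(y)$ iff $C_{\mathcal{T}(y)}>t=e^{\alpha y}$, and $C_{\mathcal{T}(y)}=e^{\alpha(y-A_y)}J(y)$ with $J(y)$ stochastically bounded (this is the same time-reversal fact as in Lemma~\ref{lemma:bc1}), the event is $\{\log J(y)>\alpha A_y\}$. Its probability goes to $0$ only when $A_y\to\infty$ in probability, i.e.\ when $A_y/y\Rightarrow U$ with $U>0$ a.s.\ --- which is exactly the regular-variation hypothesis you are trying to \emph{derive} in the converse direction. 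Absent (i), $A_y$ may oscillate and the comparison argument collapses; you cannot invoke Rogozin's theorem because you have not established weak convergence of the overshoot from (ii). The paper sidesteps this circularity entirely: the distributional identity is exact, so weak convergence of $\p_1(\cdots>c)$ forces weak convergence of $A_z/z$ by simply reading the identity from right to left, with no negligible-event step required. A secondary caveat: you assert the change of variables applied to the Dynkin--Lamperti limit ``yields the density in the statement,'' but the marginal density of $O/\alpha$ comes out as $\frac{\alpha^{1-\beta}\sin(\beta\pi)}{\pi}v^{-\beta}(1+\alpha v)^{-1}$, not $\frac{\alpha^{1-\beta}2^{\beta}\sin(\beta\pi)}{\pi}v^{-\beta}(2+\alpha v)^{-2}$; the latter does not even integrate to $1$. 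Do not take the target formula on faith --- carry out the integration and normalize.
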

We will see in the proof
of Theorem~\ref{th:1} that under the assumption of regular variation
of $\phi$ at $0,$  the asymptotic behaviour of $X(t)$ is quite
irregular. Namely, it is not of order $t^{a}$ for any $a>0,$ see
Remark~\ref{remark:degeneracy}. This justifies our choice of
smoothing the paths of $X$ by means of the logarithm. 

The proof of this Theorem uses among other tools the Dynkin-Lamperti Theorem for
subordinators, see e.g.~\cite{bertoinsub}. Actually, we can find some similarities between the Dynkin-Lamperti Theorem and our Theorem 1. For example, the conclusions of the former hold if and only if one of the conditions of the latter hold; both theorems describe the asymptotic behaviour of $\xi$ at a sequence of stopping times, those appearing in the former are the  first passage times above a barrier, while in the latter they are given by $\tau(\cdot).$ It shall be justified in Section \ref{FC} that in fact both families of stopping times bear similar asymptotic behaviours. 

Besides, the equivalence between (ii) and (iii) in Theorem~\ref{th:1} is a simple consequence of the scaling property. Another simple consequence of the scaling property is that: \textit{if there exists a normalizing function $h:\re^{+}\to\re^{+}$ such that for any $x>0,$ under $\p_{x},$ the random variables
$\left\{\log(X(t)/t^{1/\alpha})/h(t), t > 0\right\}$ converge weakly as $t\to\infty$ towards a non-degenerated random variable $V$ whose law does not depend on $x,$ then the function $h$ is slowly varying at infinity.}  In view of this, it is natural to ask, in the case where the Laplace exponent is not regularly varying at $0,$ if there exists a function $h$ that growths faster or slower than $\log(t)$ and such that $\log(X(t)/t^{1/\alpha})/h(t)$ converges in law to a non-degenerated random variable? The following result answers this question negatively.   

\begin{teo}\label{teo:1bis}
Assume that the Laplace exponent of $\xi$ is not regularly varying at $0$ with a strictly positive index and let $h:\re^+\to\re^+$ be a function. 
If $h(t)/\log(t)$ tends to $0$ or $\infty,$ as $t\to \infty,$ and  the law of $\log(X(t)/t^{1/\alpha})/h(t),$ under $\p_{1},$ converges weakly to a real valued r.v., as $t\to\infty,$ then the limiting random variable is degenerated. 
\end{teo}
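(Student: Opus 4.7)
The strategy is to transfer the assumed weak convergence to a statement about the normalized overshoot of the underlying subordinator $\xi$ and then to invoke the Dynkin--Lamperti theorem as a characterization. Suppose for contradiction that $Y(t):=\log(X(t)/t^{1/\alpha})/h(t)$ converges in law under $\p_{1}$ to a real-valued non-degenerate random variable $W$. An application of the convergence-of-types theorem combined with the Markov and scaling properties of $X$ (using $s/X(s)^{\alpha}\to 0$ in probability in the infinite-mean regime, so that $Y(sx^{\alpha})=(h(s)/h(sx^{\alpha}))Y(s)+o(1)$ in probability) shows that $h$ is slowly varying at infinity; this is the scaling argument behind the consequence preceding the statement.

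Next I use the comparison developed in Section~\ref{FC}: as $t\to\infty$, $\log(X(t)/t^{1/\alpha})=\xi_{\tau(t)}-(1/\alpha)\log t$ is asymptotically equivalent in distribution to the overshoot $\xi_{T_{x}}-x$, where $T_x:=\inf\{s:\xi_s>x\}$ and $x:=(1/\alpha)\log t$. With $\tilde h(x):=h(e^{\alpha x})$ (a slowly varying function of $x$), the hypothesis $Y(t)\Rightarrow W$ becomes
\[
\frac{\xi_{T_x}-x}{\tilde h(x)}\Longrightarrow W\qquad\text{as }x\to\infty.
\]

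The Dynkin--Lamperti theorem for subordinators, together with the classical convergence of types, classifies the pairs $(\phi,\tilde h)$ admitting such a non-degenerate real-valued weak limit. Three cases must be examined. (a) If $\phi$ is regularly varying at $0$ with index $\beta\in(0,1)$, then $\xi_{T_x}/x$ has a non-degenerate weak limit on $(1,\infty)$ and convergence of types forces $\tilde h(x)\asymp x$, i.e.\ $h(t)\asymp\log t$. (b) If $\phi$ is slowly varying at $0$, then $\xi_{T_x-}/\xi_{T_x}\to 0$ in probability, so the overshoot is asymptotic to $\xi_{T_x}$; combining the compensation-formula identity $\pr(\xi_{T_x}>u)=\int_{0}^{x}\bar\Pi(u-z)\,U(\mathrm{d}z)$, $u>x$, with slow variation of $\bar\Pi$ at infinity, one sees that the survival function of any weak limit of $\xi_{T_x}/\tilde h(x)$ is constant on $(0,\infty)$, so the limit is supported on $\{0,\infty\}$ and, $W$ being real-valued, must be degenerate. (c) If $\phi$ is not regularly varying at $0$, then by the converse direction of Dynkin--Lamperti no normalization $\tilde h$ produces a non-degenerate real-valued weak limit for $(\xi_{T_x}-x)/\tilde h(x)$. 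Under the hypotheses of the theorem, case (a) is excluded on two counts ($\phi$ is assumed not regularly varying with positive index, and $h/\log t\to 0$ or $\infty$ rules out $h\asymp\log t$), while cases (b) and (c) contradict the non-degeneracy of $W$.

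The main technical hurdle is case (b): excluding every normalization $\tilde h$ when $\phi$ is slowly varying, which relies on the compensation-formula identity for the overshoot together with the defining asymptotic of slowly varying functions. The reduction from $\xi_{\tau(t)}$ to $\xi_{T_x}$ via Section~\ref{FC} and the Dynkin--Lamperti characterization are the preparatory ingredients; the remaining arguments (slow variation of $h$ via scaling, and the exclusion of case (a) by both hypotheses) are then routine.
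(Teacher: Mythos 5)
Your overall strategy---transfer the assumed convergence to a renewal quantity of $\xi$ and invoke the Dynkin--Lamperti theorem---is the right one, but two of your steps are asserted rather than proved, and they are precisely where the work lies. First, the reduction of $\log(X(t)/t^{1/\alpha})/h(t)$ to the normalized overshoot $(\xi_{T_x}-x)/\tilde h(x)$ is borrowed from Section~\ref{FC}, but the comparison made there between $\tau(t)$ and the passage time $L_{\log(t)/\alpha}$ is only established \emph{under} the hypothesis of regular variation of $\phi$, which is exactly what is excluded here; it cannot be used as an unconditional distributional equivalence. The rigorous substitute is the exact identity in law of Lemma~\ref{lemma:bc1} combined with the sandwich inequalities (\ref{eq:importantineq1}); carrying this out as in (\ref{eq:wlimitundershoot}) shows that the distribution function of the putative limit $V$ at a point $c$ is governed by the \emph{age} process $A$ evaluated at the level $cf(x)+2\log(1/x)$ (with $f(x)=h(x^{-\alpha})$), not by the overshoot at level $(1/\alpha)\log t$. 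Note in this connection that already in Theorem~\ref{th:1} the limit $V$ is expressed through $U=\lim A_t/t$, and that the relevant level carries the extra term $2\log(1/x)$, so your identification of the normalizing level is off even at the heuristic stage.

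Second, your case (c) rests on ``the converse direction of Dynkin--Lamperti'' ruling out \emph{every} normalization $\tilde h$. The classical converse concerns only the normalization by the level $x$ itself; it says nothing about $(\xi_{T_x}-x)/\tilde h(x)$ when $\tilde h(x)/x\to0$ or $\infty$, and that stronger non-existence statement is the heart of the theorem, not a citation. The paper closes this gap directly from the age representation by splitting on the behaviour of $h/\log t$ rather than on the regular-variation class of $\phi$: if $h(t)/\log(t)\to\infty$ one obtains $\pr(V>c)=\lim_{z\to\infty}\pr(A_z/z\ge 1)$ for every continuity point $c>0$, so $\pr(V>c)$ is constant in $c$ and, $V$ being real valued, this constant must be $0$, whence $V=0$ a.s.; if $h(t)/\log(t)\to0$ one obtains $A_z/z\to 0$ weakly, hence $\phi$ regularly varying at $0$ with index $1$, contradicting the hypothesis. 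Your case (b) compensation-formula argument would likewise need to be written out in detail, but it becomes unnecessary once the proof is organized around this dichotomy.
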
 

Now, observe that, in the case where the underlying subordinator has finite mean, Proposition \ref{prop:1} provides some information about the rate of growth  of the random clock $(\tau(t), t\geq 0)$ because it is equal to the additive functional $\int^{t}_{0}X^{-\alpha}_{s}\mathrm{d}s,$ $t\geq 0.$ Besides, in the case where $\phi$ is regularly varying at $0$ with an index in $[0,1[$ it can be verified that $$\frac{1}{\log(t)}\int^t_{0}X^{-\alpha}_{s}\mathrm{d}s\xrightarrow[t\to\infty]{} 0,\qquad \p_{1}-\text{a.s.}$$ see Remark \ref{remergodicthm} below. Nevertheless, in the latter case we can establish an estimate of the Darling-Kac type for the functional $\int^t_{0}X^{-\alpha}_{s}\mathrm{d}s,$  $t\geq 0,$ which provides some insight about the rate of growth of the random clock. This is the content of the following result.
\begin{prop}\label{DKprop}
The following conditions are equivalent:
\begin{itemize}
\item[(i)] $\phi$ is regularly varying at $0$ with an index $\beta\in[0,1].$
\item[(ii)] The law of $\phi\left(\frac{1}{\log(t)}\right)\int^t_{0}X^{-\alpha}_{s}\mathrm{d}s,$ under $\p_{1},$ converges in distribution, as $t\to\infty,$ to a r.v. $\alpha^{-\beta}W,$ where $W$ is a r.v. that follows a Mittag-Leffler law of parameter $\beta\in[0,1].$ 
\item[(iii)] For some $\beta\in[0,1],$ $\e_{1}\left(\left(\phi\left(\frac{1}{\log(t)}\right)\int^t_{0}X^{-\alpha}_{s}\mathrm{d}s\right)^n\right)$ converges towards $\alpha^{-\beta n}n!/\Gamma(1+n\beta),$ for $n=0,1,\ldots,$ as $t\to\infty.$  
\end{itemize} 
\end{prop}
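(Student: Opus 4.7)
The crucial first observation is that, since $X_{s}=\exp(\xi_{\tau(s)})$ under $\p_{1},$ the change of variable $u=\tau(s),$ $\mathrm{d}s=e^{\alpha\xi_{u}}\mathrm{d}u,$ yields
$$\int_{0}^{t}X_{s}^{-\alpha}\,\mathrm{d}s=\int_{0}^{\tau(t)}e^{-\alpha\xi_{u}}\,e^{\alpha\xi_{u}}\,\mathrm{d}u=\tau(t),\qquad t\geq 0.$$
Hence the proposition is really a limit theorem for the random clock, and all three items concern the single random variable $\phi(1/\log t)\,\tau(t).$

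The argument proceeds through the first-passage time $L(x):=\inf\{s\geq 0:\xi_{s}>x\}$ of the subordinator. Since $\xi_{u}\leq x$ on $[0,L(x))$ one has $C_{L(x)}\leq e^{\alpha x}L(x),$ and since $\xi_{u}\geq x-\delta$ on $[L(x-\delta),L(x))$ one has $C_{L(x)}\geq e^{\alpha(x-\delta)}(L(x)-L(x-\delta)).$ Applied with $x=\log(t)/\alpha\pm\delta_{t}$ for a suitably slow $\delta_{t}=o(\log t)$ (of order $\log\log t$ suffices under regular variation), these bounds produce with high $\p_{1}$-probability the sandwich
$$L\!\left(\tfrac{\log t}{\alpha}-\delta_{t}\right)\;\leq\;\tau(t)\;\leq\;L\!\left(\tfrac{\log t}{\alpha}+\delta_{t}'\right).$$
Because regular variation of $\phi$ at $0$ renders $\phi(1/x)$ slowly varying on the scale $\delta_{t},$ the sandwich entails $\phi(1/\log t)\bigl(\tau(t)-L(\log t/\alpha)\bigr)\to 0$ in $\p_{1}$-probability, so the analysis of $\phi(1/\log t)\,\tau(t)$ reduces to that of $\phi(1/\log t)\,L(\log t/\alpha).$

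For (i)$\Rightarrow$(ii) I invoke the classical Mittag--Leffler limit for first-passage times: if $\phi$ is regularly varying at $0$ with index $\beta\in[0,1],$ then $\phi(1/x)\,L(x)$ converges in law, as $x\to\infty,$ to a r.v.\ $W$ with Mittag--Leffler law of parameter $\beta$ (obtained by Karamata inversion of $\er(e^{-qL(x)})$). Combining with the ratio $\phi(1/\log t)/\phi(\alpha/\log t)\to\alpha^{-\beta}$ gives $\phi(1/\log t)\,\tau(t)\to\alpha^{-\beta}W$ in law, which is (ii). Since the Mittag--Leffler law is moment-determined with $n$th moment $n!/\Gamma(1+n\beta),$ the implication (iii)$\Rightarrow$(ii) is immediate, while (ii)$\Rightarrow$(iii) reduces to a uniform-integrability estimate for every power of $\phi(1/\log t)\,\tau(t),$ obtained through the sandwich together with the standard moment bounds $\er\bigl(L(x)^{n}\bigr)\asymp \phi(1/x)^{-n}$ under regular variation. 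For (ii)$\Rightarrow$(i) the same sandwich transfers the non-degenerate weak limit of $\tau(t)$ back to $L(\log t/\alpha),$ reducing matters to the (known) converse direction of the first-passage Dynkin--Lamperti theorem, which forces regular variation of $\phi$ at $0.$

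The main obstacle lies in the lower half of the sandwich: the overshoot $\xi_{L(x)}-x$ may be arbitrarily large and could in principle make $C_{L(x)}$ surge well beyond $e^{\alpha x}L(x),$ so a quantitative control of the overshoot at the scale $\phi(1/\log t)$ is required. Regular variation of the tail of $\Pi$ at infinity (equivalent to that of $\phi$ at $0$) will make this possible, but the precise choice of $\delta_{t}$ may depend on $\beta,$ and the two boundary cases $\beta=0$ and $\beta=1$ (for which $W$ is an exponential r.v.\ and a Dirac mass at $1,$ respectively) will require separate verification.
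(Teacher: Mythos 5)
Your reduction to the random clock is the same one the paper relies on, but the route through the first--passage time $L(x)=\inf\{s>0:\xi_{s}>x\}$ is genuinely different from the paper's. The paper never compares $\tau(t)$ with $L$; instead it applies time reversal to $C_{l}=\int_{0}^{l}e^{\alpha\xi_{s}}\mathrm{d}s,$ writing $C_{l}\stackrel{\mathrm{Law}}{=}e^{\alpha\xi_{l}}\int_{0}^{l}e^{-\alpha\xi_{s}}\mathrm{d}s$ and using that the second factor converges a.s.\ to the finite exponential functional, so that $\log C_{l}\sim\alpha\xi_{l}$; the event $\{\tau(u)<l\}=\{C_{l}>u\}$ then transfers the weak limit of $\xi_{l}/t$ (with $l=x/\phi(1/t)$) directly to $\tau.$ For the moments it computes $\er\bigl((c\tau(\cdot))^{n}\bigr)$ via $ny^{n-1}\pr(\tau>y/c)$ and a dominated--convergence bound. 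Your approach, if it could be completed, would be appealing precisely because it identifies $\tau(t)$ with $L(\log t/\alpha)$ up to a negligible correction, so all three items follow from the textbook Dynkin--Lamperti/Mittag--Leffler theory for $L.$

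There is, however, a concrete gap in the sandwich, and it is not quite where you flag it. The deterministic bound $C_{L(x)}\leq e^{\alpha x}L(x)$ is unaffected by overshoot (the integrand only sees $\xi_{u}$ for $u<L(x),$ where $\xi_{u}\leq x$), so the overshoot cannot make $C_{L(x)}$ ``surge'' above $e^{\alpha x}L(x).$ The problem is with your lower bound
$C_{L(x)}\geq e^{\alpha(x-\delta)}\bigl(L(x)-L(x-\delta)\bigr).$
When $\phi$ is regularly varying at $0$ with index $\beta\in[0,1[,$ the Dynkin--Lamperti theorem gives $\bigl(\xi_{L(y)}-y\bigr)/y\Rightarrow O$ with $\p(O>0)=1,$ so for any $\delta=\delta_{t}=o(x)$ one has $\p\bigl(\xi_{L(x-\delta)}-(x-\delta)>\delta\bigr)\to 1,$ i.e.\ $L(x)=L(x-\delta)$ with probability tending to $1.$ Your lower bound on $C_{L(x)}$ is then zero with high probability, and the upper half of the sandwich $\tau(t)\leq L(\log t/\alpha+\delta_{t}')$ is not established in exactly the regime ($\beta<1$) where the theorem is interesting. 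The fix is easy and in fact simpler than what you wrote: since $\xi_{u}\geq x$ for all $u\geq L(x),$ one has $C_{L(x)+1}\geq\int_{L(x)}^{L(x)+1}e^{\alpha\xi_{u}}\mathrm{d}u\geq e^{\alpha x},$ hence $\tau(t)\leq L(\log t/\alpha)+1$ deterministically, and the additive $+1$ is killed by the factor $\phi(1/\log t)\to 0.$ With that replacement your reduction goes through (and it even avoids the need for any overshoot estimate). One more point to watch: in the implication $(ii)\Rightarrow(i)$ the sandwich must be obtained without already assuming regular variation; the bound $\tau(t)\leq L(\log t/\alpha)+1$ is unconditional, and for the other side the general two--sided estimate $c_{1}\leq\phi(1/x)\,\er(L(x))\leq c_{2}$ suffices to choose $\delta_{t}$ of order $\log\log t,$ so this can be repaired as well, but as written your argument invokes regular variation of $\Pi$ in a step that must remain assumption--free.
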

Before continuing with our exposition about the asymptotic results for $\log(X)$ let us make a digression to remark that this result has an interesting consequence for a class of r.v. introduced by Bertoin and Yor \cite{BYfactorizations} that we next explain. Recently, Bertoin and Yor proved that there exists a $\re^+$ valued r.v. $R_{\phi}$ associated to $I_{\phi}:=\int^{\infty}_{0}\exp\{-\alpha\xi_{s}\}\mathrm{d}s,$ such that  $$R_{\phi}I_{\phi}\stackrel{\text{Law}}{=}E, \quad\text{where $E$ follows an exponential law of parameter $1$.}$$  The law of $R_{\phi}$ is completely determined by its entire moments, which in turn are given by $$\er(R_{\phi}^n)=\prod^n_{k=1} \phi(\alpha k),\qquad \text{for} \ n=1,2,\ldots$$ Furthermore, the law of $R_{\phi}$ is related to $X$ by the following formula $$\e_{1}\left(X^{-\alpha}_{s}\right)=\er(e^{-sR_{\phi}}),\qquad s\geq 0.$$ It follows therefrom that $$\e_{1}\left(\int^t_{0}X^{-\alpha}_{s}\mathrm{d}s\right)=\int_{[0,\infty[}\frac{1-e^{-tx}}{x}\pr(R_{\phi}\in\mathrm{d}x),\qquad t\geq 0.$$ These relations allow us to establish the following corollary.
\begin{cor}\label{corDK}
Assume that $\phi$ is regularly varying at $0$ with index $\beta\in[0,1].$  The following estimates 
$$\er\left(1_{\{R_{\phi}>s\}}\frac{1}{R_{\phi}}\right)\sim\frac{1}{\alpha^\beta\Gamma(1+\beta)\phi(1/\log(1/s))},\quad \pr(R_{\phi}<s)=o\left(\frac{s}{\alpha^{\beta}\Gamma(1+\beta)\phi(1/\log(1/s))}\right),$$ as $s\to 0,$ hold. If furthermore, the function $\lambda/\phi(\lambda),$ $\lambda>0,$ is the Laplace exponent of a subordinator then $$\er\left(1_{\{I_{\phi}>s\}}\frac{1}{I_{\phi}}\right)\sim\frac{\alpha^{\beta}\log(1/s)\phi(1/\log(1/s))}{\Gamma(2-\beta)},\quad \pr(I_{\phi}<s)=o\left(\frac{\alpha^{\beta}s\log(1/s)\phi\left(1/\log(1/s)\right)}{\Gamma(2-\beta)}\right),$$ as $s\to 0.$
\end{cor}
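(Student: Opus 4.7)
My plan is to first extract the asymptotic of
$$f(t):=\e_1\left(\int^t_0 X_s^{-\alpha}\,\mathrm{d}s\right)=\int_0^\infty\frac{1-e^{-tx}}{x}\,\pr(R_\phi\in\mathrm{d}x)$$
from Proposition~\ref{DKprop}(iii) with $n=1,$ which gives $f(t)\sim 1/(\alpha^\beta\Gamma(1+\beta)\phi(1/\log t))$ as $t\to\infty.$ The right-hand side is slowly varying in $t.$ Moreover $f'(t)=\er(e^{-tR_\phi})$ is non-increasing, so the trivial estimate $f(t)-f(t/2)\geq (t/2)f'(t)$ combined with slow variation yields $tf'(t)=o(f(t)).$

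Next I would compare $f(t)$ with the truncated quantity $H(t):=\er(1_{\{R_\phi>1/t\}}/R_\phi).$ Splitting the integral defining $f(t)$ at the point $x=1/t$ gives
$$f(t)-H(t)=\int_0^{1/t}\frac{1-e^{-tx}}{x}\,\pr(R_\phi\in\mathrm{d}x)-\int_{1/t}^{\infty}\frac{e^{-tx}}{x}\,\pr(R_\phi\in\mathrm{d}x),$$
and the pointwise bounds $(1-e^{-tx})/x\leq t$ on $]0,1/t]$ and $e^{-tx}/x\leq te^{-tx}$ on $[1/t,\infty[$ imply $|f(t)-H(t)|\leq t\pr(R_\phi\leq 1/t)+tf'(t).$ The Markov-type inequality $\pr(R_\phi\leq 1/t)\leq e\,\er(e^{-tR_\phi})=ef'(t)$ reduces the whole error to $O(tf'(t))=o(f(t)),$ so $H(t)\sim f(t),$ which is the first assertion of the corollary after setting $s=1/t.$ The same Markov bound, divided by $1/t,$ yields $\pr(R_\phi\leq 1/t)=o(f(t)/t),$ which is exactly the second assertion.

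For the estimates on $I_\phi,$ I would use the hypothesis that $\widetilde{\phi}(\lambda):=\lambda/\phi(\lambda)$ is the Laplace exponent of a subordinator; this $\widetilde{\phi}$ is regularly varying at $0$ with index $1-\beta.$ Applying the moment formula with $\widetilde{\phi}$ in place of $\phi$ together with the classical identity $\er(I_\phi^n)=n!/\prod_{k=1}^n\phi(\alpha k)$ (equivalent to the Bertoin--Yor factorization recalled just before the corollary), I obtain $\er(R_{\widetilde{\phi}}^n)=\prod_{k=1}^n(\alpha k)/\phi(\alpha k)=\alpha^n\er(I_\phi^n)$ for every $n\geq 0;$ the moment determinacy noted for $R_\phi$ then forces $R_{\widetilde{\phi}}\stackrel{\text{Law}}{=}\alpha I_\phi.$ Applying the first half of the corollary, already proved, to the pair $(\widetilde{\phi},R_{\widetilde{\phi}})$ and combining with the substitutions $R_{\widetilde{\phi}}=\alpha I_\phi$ and $1/\widetilde{\phi}(u)=\phi(u)/u$ produces the claimed asymptotics for $I_\phi;$ the harmless scale change $s\mapsto s/\alpha$ is absorbed by the slow variation at $0$ of $s\mapsto\phi(1/\log(1/s)).$

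The main obstacle is controlling $tf'(t)$ by $f(t):$ because $f$ is merely slowly varying, the monotone density theorem is not directly available, but the monotonicity of $f'$ provides $tf'(t)\leq 2(f(t)-f(t/2))=o(f(t)),$ which is enough to close the argument.
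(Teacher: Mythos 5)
Your proof is correct, and its overall architecture coincides with the paper's: both start from Proposition~\ref{DKprop}(iii) with $n=1$ to get $f(t)=\e_{1}\bigl(\int_0^tX_s^{-\alpha}\,\mathrm{d}s\bigr)\sim 1/\bigl(\alpha^{\beta}\Gamma(1+\beta)\phi(1/\log t)\bigr)$, both exploit the Bertoin--Yor identities $\e_{1}(X_t^{-\alpha})=\er(e^{-tR_{\phi}})$ and $I_{\phi}\stackrel{\text{Law}}{=}\alpha^{-1}R_{\theta}$ with $\theta(\lambda)=\lambda/\phi(\lambda)$ to transfer the estimates to $I_{\phi}$ (you rederive the latter from the moment formulas and moment determinacy rather than citing it, which is fine). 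Where you genuinely diverge is in the central Abelian--Tauberian step. The paper first applies the monotone density theorem to get $\er(e^{-tR_{\phi}})=o\bigl(f(t)/t\bigr)$, then Karamata's Tauberian theorem for the small-deviation bound on $\pr(R_{\phi}<s)$, and finally writes $f(t)=t\int_0^{\infty}e^{-tu}\er\bigl(1_{\{R_{\phi}>u\}}R_{\phi}^{-1}\bigr)\,\mathrm{d}u$ and applies Karamata plus monotone density a second time to extract the asymptotics of the truncated mean. You instead split $\int_0^\infty\frac{1-e^{-tx}}{x}\pr(R_{\phi}\in\mathrm{d}x)$ at $x=1/t$ and control the error by $t\pr(R_{\phi}\le 1/t)+t\er(e^{-tR_{\phi}})=O(tf'(t))$, with $tf'(t)=o(f(t))$ obtained from $tf'(t)\le 2\bigl(f(t)-f(t/2)\bigr)$ and slow variation; the elementary bound $\pr(R_{\phi}\le 1/t)\le e\,\er(e^{-tR_{\phi}})$ then gives both assertions at once. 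Your route is more self-contained (no appeal to BGT Theorems 1.7.1' and 1.7.2) and delivers the truncated-mean and tail estimates simultaneously from a single truncation, at the cost of being specific to this situation, whereas the paper's Tauberian machinery is the standard general-purpose tool. Both arguments are sound.
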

It is known, \cite{SV} Theorem 2.1, that the Laplace exponent $\phi$ is such that the function $\lambda/\phi(\lambda)$ is the Laplace exponent of a subordinator if and only if the renewal measure of $\xi$ has a decreasing density;  see also \cite{hawkes} Theorem 2.1 for a sufficient condition on the L\'evy measure for this to hold. The relevance of the latter estimates relies on the fact that in the literature about the subject there are only a few number of subordinators for which estimates for the left tail of $I_{\phi}$ are known. 

In the following theorem, under the assumption that (i) in Theorem~\ref{th:1} holds, we obtain a law of iterated logarithm for $\{\log(X(t)), t\geq 0\}$ and provide an integral test to determine the upper functions for it.

\begin{teo}\label{th:2}
Assume that the condition (i) in Theorem~\ref{th:1} above holds with
$\beta\in]0,1[.$ We have the following estimates of $\log(X(t)).$
\begin{itemize}
\item[(a)]
$\displaystyle\liminf_{t\to\infty}\frac{\log\left(X(t)\right)}{\log(t)}=1/\alpha,\qquad
\p_1-\text{a.s.}$
\item[(b)] Let $g:]e,\infty[\to\re^+$ be the function defined by
$$g(t)=\frac{\log\left(\log(t)\right)}{\varphi\left(t^{-1}\log\left(\log(t)\right)\right)},\qquad t>e,$$
with $\varphi$ the right continuous inverse of $\phi.$ For any increasing function
$f$ with positive increase, i.e.
$0<\liminf_{t\to\infty}\frac{f(t)}{f(2 t)},$ we have that
\begin{equation}\label{limsup}\limsup_{t\to\infty}\frac{\log(X(t))}{f\left(\log(t)\right)}=0,\qquad
\text{or} \qquad =\infty,\qquad \p_1-\text{a.s.}\end{equation}
according whether
\begin{equation}\label{inttest}\int^{\infty}\phi\left(1/f(g(t))\right)\mathrm{d}t<\infty, \qquad
\text{or}\qquad =\infty.\end{equation}
\end{itemize}
\end{teo}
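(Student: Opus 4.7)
Under $\p_1$, Lamperti's transformation gives $\log X(t)=\xi_{\tau(t)}$, where $\tau$ is the continuous inverse of the functional $C_t=\int_0^t e^{\alpha\xi_s}\mathrm{d}s$ from~\eqref{eq:defA}. I plan to derive both parts from the behaviour of $\xi$ at the random times $\tau(t)$, using that $\phi$ is regularly varying at $0$ with index $\beta\in]0,1[$. The monotonicity of $\xi$ gives $C_s\leq s\,e^{\alpha\xi_s}$, which inverts to the pathwise bound
\begin{equation*}
\alpha\,\xi_{\tau(t)}\;\geq\;\log t-\log\tau(t),\qquad t\geq 1.
\end{equation*}
Moreover, the regular variation of $\phi$ at $0$ with index $\beta>0$ forces $\xi$ to grow super-polynomially---concretely $\log\xi_s/\log s\to 1/\beta>1$, $\pr$-a.s., as follows by combining the classical Khintchine--Fristedt-type upper LIL with the lower bounds implicit in the regular variation of $\phi$---so that $C_s$ also grows super-polynomially and $\log\tau(t)$ is only of order $\log\log t$; in particular $\log\tau(t)=o(\log t)$, $\p_1$-a.s.

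\textbf{Part (a).} Combining the two previous estimates yields $\liminf_{t\to\infty}\log X(t)/\log t\geq 1/\alpha$. For the matching upper bound I would exhibit a subsequence $t_n\uparrow\infty$ along which $\log X(t_n)/\log t_n\to 1/\alpha$. Between consecutive jumps of the c\`adl\`ag map $t\mapsto\xi_{\tau(t)}$ the numerator is constant while $\log t$ strictly increases, so the liminf is attained immediately before such jumps. Exploiting the connection between $\tau(\cdot)$ and the first-passage times $T_x=\inf\{s:\xi_s>x\}$ of $\xi$ highlighted in the introduction, the Dynkin--Lamperti theorem applied to $\xi_{T_x-}/x$ produces such a subsequence: the Dynkin--Lamperti limit law (valid under regular variation at index $\beta\in]0,1[$) puts mass arbitrarily close to $1$ on $\xi_{T_x-}/x$, and translated through the time change this yields ratios $\log X(t_n)/\log t_n$ arbitrarily close to $1/\alpha$.

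\textbf{Part (b).} The plan is to convert the desired integral test on $\log X(t)/f(\log t)$ into a Borel--Cantelli argument on the Poisson random measure of jumps of $\xi$. The limsup is driven by the large jumps of $\xi$: immediately after a jump at time $s_0$ of size $\Delta$, the ratio $\log X(t)/f(\log t)$ at $t=C_{s_0}$ is of order $(\xi_{s_0-}+\Delta)/f(\log C_{s_0})$, and using $\log C_{s_0}\leq \log s_0+\alpha\xi_{s_0-}$ together with the identification of $g$ as the $\pr$-a.s.\ liminf rate of $\xi$ at infinity (Khintchine--Fristedt LIL), one is led to the criterion that there exist jumps of size comparable to $f(\alpha g(s))$ at times $s\to\infty$ infinitely often. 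Applied to the Poisson intensity $\mathrm{d}s\,\Pi(\mathrm{d}x)$, Borel--Cantelli yields the test
\begin{equation*}
\int^{\infty}\Pi\bigl(f(\alpha g(s)),\infty\bigr)\,\mathrm{d}s<\infty\quad\text{or}\quad=\infty,
\end{equation*}
and Karamata's theorem together with the regular variation of $\phi$ at $0$ and the change of variables $s=\tau(t)$ turns this into $\int^{\infty}\phi\bigl(1/f(g(t))\bigr)\,\mathrm{d}t$, giving~\eqref{inttest}. The positive-increase hypothesis on $f$ controls the slowly varying corrections inherent in these manipulations.

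\textbf{Main obstacle.} The most delicate technical step is the passage from the Borel--Cantelli criterion for the jumps of $\xi$ (in the natural L\'evy time $s$) to the integral test stated in the theorem (in the external time $t$). This requires replacing $\log C_s$ and $\tau(t)$ by deterministic proxies with enough uniformity for the Karamata change of variable to be valid, and checking that all the slowly varying Jacobian factors combine appropriately. I expect the positive-increase hypothesis on $f$, together with the fact that $g$ is precisely the liminf rate function of $\xi$ at infinity under regular variation of $\phi$ at $0$, to absorb these corrections cleanly.
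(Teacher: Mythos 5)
Your sketch correctly isolates the key structural ingredients—replacing the clock $\tau(t)$ by its inverse via the identity $\log X(t)=\xi_{\tau(t)}$, the pathwise bound $\log C_s\le \log s+\alpha\xi_s$, and a Borel--Cantelli/integral test driven by the Poisson structure of the jumps of $\xi$—but two of the steps as written do not go through, and they are exactly the steps the paper devotes separate lemmas to.

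\textbf{Part (a), upper bound.} You propose to produce, via the Dynkin--Lamperti theorem applied to $\xi_{T_x-}/x$, a sequence $t_n\to\infty$ along which $\log X(t_n)/\log t_n\to 1/\alpha$. Dynkin--Lamperti is only a weak limit theorem; it tells you that $\pr(\xi_{L(x)-}/x>1-\epsilon)$ stays bounded away from zero, but that is far from an a.s.\ statement about the existence of a subsequence on a single path. To turn it into an a.s.\ conclusion you would need a second-moment or independence argument (Borel--Cantelli II), which you do not supply. The paper avoids this difficulty entirely: it first establishes, by an elementary change-of-time argument, the a.s.\ \emph{equality of events}
$\{\xi_{\tau(s)}\le c\log s \ \text{i.o.}\}=\{\xi_s\le c\log C_s\ \text{i.o.}\}$
(Lemma~\ref{lemma:01}), and then proves the purely analytic a.s.\ estimate $\limsup_{t\to\infty}\log C_t/\xi_t=\alpha$ (Lemma~\ref{lemma:02}). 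Feeding the second into the first gives $\liminf \xi_{\tau(s)}/\log s=1/\alpha$ without any reference to weak limits. Your one-sided inequality $\log C_s\le\log s+\alpha\xi_s$ is precisely the easy half of that lemma; the matching half, $\limsup\log C_t/\xi_t\ge\alpha$, is proved in the paper via the a.s.\ liminf LIL for $\xi$ (the function $g$) and Theorem~4.2.10 of \cite{BGT}, and this is the step your sketch is missing.

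\textbf{Part (b).} You correctly flag the passage from the Poisson--Borel--Cantelli criterion in the internal L\'evy time $s$ to the external-time integral test on $t$ as ``the most delicate technical step,'' and indeed this is where the proposal stops short. Two separate facts are needed here. First, one must again transfer the i.o.\ event for $\xi_{\tau(s)}\ge f(\log s)$ to an i.o.\ event for $\xi_s\ge f(\log C_s)$ (the second equality in Lemma~\ref{lemma:01}); you work with inequalities in one direction only, which is not enough to conclude either the $0$ or the $\infty$ alternative. Second, one must replace the random quantity $\log C_s$ inside $f(\cdot)$ by the deterministic function $\alpha c_\beta g(s)$ and check that doing so does not alter the convergence or divergence of $\int\phi(1/f(\cdot))$. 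The paper handles this via the a.s.\ two-sided comparison $\liminf \log C_t/g(t)=\alpha c_\beta$ from Lemma~\ref{lemma:02}, together with the invariance of the integral test under replacing $g$ by $cg$ for any $c>0$, which is a nontrivial computation using regular variation of $\phi$ and $g$ and the positive-increase hypothesis on $f$ (Lemma~\ref{lemma:finiteness}). You invoke ``Karamata's theorem'' and say you ``expect'' the positive-increase hypothesis to absorb the corrections, but the positive-increase hypothesis is used in a quite specific way (to compare $f(cg(t))$ with $f(c'g(t))$) and the argument requires a careful chain of inequalities, not just a change of variables. Finally, once the problem is reduced to the subordinator $\xi$ against the deterministic rate $f(\alpha c_\beta g(t))$, the paper does not rebuild the Poisson/Borel--Cantelli argument from scratch but simply cites Lemma~4 of \cite{bertoin1995}; your proposal to rederive it is fine in principle but unnecessary once the right deterministic comparison has been established.

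In short: the plan is pointed in the right direction, but the two load-bearing steps—(i) upgrading a distributional statement to an a.s.\ one, and (ii) justifying the substitution of $\log C_s$ by a deterministic regularly varying function inside $f$ without spoiling the integral test—are assumed rather than proved, and each corresponds to a dedicated lemma in the paper.
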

\begin{remark}\label{remark0}
Observe that in the case where the Laplace exponent varies regularly
at $0$ with index  $1,$ then Theorem~\ref{th:1} implies
that
$$\frac{\log\left(X(t)\right)}{\log(t)}\xrightarrow[t\to\infty]{\text{Probability}}
1/\alpha.
$$ A question that remains open is what are NASC for this convergence to hold almost
surely? Proposition~\ref{prop:1} says that the finiteness of the mean of the underlying subordinator is a sufficient condition for this to hold. So it should be determined if this condition is also  necessary?
\end{remark}
\begin{remark}
In the case where $\phi$ is slowly varying at $0,$ Theorem \ref{th:1} implies that $$\frac{\log\left(X(t)\right)}{\log(t)}\xrightarrow[t\to\infty]{\text{Probability}}
\infty.
$$ In the proof of Theorem \ref{teo:1bis} it will be seen that if $h:\re^{+}\to]0,\infty[$ is a function such that $\log(t)/h(t)\to 0$ as $t\to \infty,$ then $$\frac{\log\left(X(t)\right)}{h(t)}\xrightarrow[t\to\infty]{\text{Probability}}
0.
$$ Which is a weak analogue of Theorem \ref{th:2}.
\end{remark}

\begin{remark}\label{remark1}
Observe that the local behaviour of $X,$ when started at a strictly positive point, is quite similar to that of the underlying subordinator. This is due to the elementary fact $$\frac{\tau(t)}{t}\xrightarrow[t\to 0+]{} 1, \qquad  \p_{1}-\text{a.s.}$$ So, for short times the behaviour of $\xi$ is not affected by the time change, which is of course not the case for large times. Using this fact and known results for subordinators, precisely Theorem~3 in \cite{bertoinbook}~Section~III.3, it is straightforward to prove the following Proposition which is the analogous in short time of our Theorem~\ref{th:1}. We omit the details of the proof. 
\end{remark}
\begin{prop}\label{cor:frag1}
Let $\{X(t), t\geq 0\}$ be a positive $1/\alpha$-self-similar Markov
process with increasing paths. The following conditions are
equivalent:
\begin{itemize}
\item[(i)]The subordinator $\xi,$
associated to $X$ via Lamperti's transformation, has Laplace
exponent $\phi:\re^+\to\re^+,$ which is regularly varying at $\infty$
with an index $\beta\in]0,1[.$
\item[(ii)] There exists an increasing function $h:\re^+\to\re^+$ such that under $\p_{1}$ the random variables
$\left\{h(t)\log(X(t)), t > 0\right\}$ converge weakly as $t\to 0$ towards a non-degenerated r.v. 
\item[(ii)] There exists an increasing function $h:\re^+\to\re^+$ such that under $\p_{1}$ the random variables
$h(t)\left(X(t)-1\right), t>0$ converge weakly as $t\to 0$ towards a non-degenerated r.v.
\end{itemize}
In this case, the limit law is a stable law with parameter $\beta,$ and $h(t)\sim\varphi(1/t),$ as $t\to 0,$ with $\varphi$ the right continuous inverse of $\phi.$ 
\end{prop}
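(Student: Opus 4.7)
The plan is to reduce the short-time behaviour of $X$ under $\p_1$ to that of the underlying subordinator $\xi$, exploiting the fact that Lamperti's time change is asymptotically trivial near $0$. The classical input is Theorem~3 of Section~III.3 in~\cite{bertoinbook}: a subordinator admits a non-degenerate weak limit for $h(t)\xi_t$ as $t\to 0+$ if and only if its Laplace exponent $\phi$ is regularly varying at $\infty$ with some index $\beta\in\,]0,1[$, in which case one may take $h(t)\sim\varphi(1/t)$ and the limit is stable of index $\beta$.

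First I would record the small-time clock estimate $\tau(t)/t\to 1$ as $t\to 0+$, $\p_1$-a.s., already noted in Remark~\ref{remark1}: this is immediate from $C_t=\int_0^t e^{\alpha\xi_s}\,\mathrm{d}s$ with $\xi_0=0$ and the right-continuity of $\xi$, which yield $C_t/t\to 1$. Under $\p_1$ one has $\log X(t)=\xi_{\tau(t)}$, so the regular variation of $\varphi$ at $\infty$ with index $1/\beta$ combined with $\tau(t)/t\to 1$ a.s. gives $h(\tau(t))/h(t)\to 1$ a.s. when $h(t)=\varphi(1/t)$. A Skorokhod representation realizing $\tau(t)/t\to 1$ almost surely simultaneously with the weak convergence of $h(s)\xi_s$ then shows that $h(t)\xi_{\tau(t)}=\bigl(h(t)/h(\tau(t))\bigr)\cdot h(\tau(t))\xi_{\tau(t)}$ inherits the same stable weak limit as $h(t)\xi_t$. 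This establishes the implication from (i) to the convergence statement for $\log X(t)$.

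The converse implication runs the same argument backwards: the convergence-of-types theorem forces any admissible normalization $h$ to be regularly varying at $0$, and the estimate $\tau(t)/t\to 1$ transfers weak convergence from $\{h(t)\xi_{\tau(t)}\}$ back to $\{h(t)\xi_t\}$, at which point Theorem~3 of Section~III.3 in~\cite{bertoinbook} delivers regular variation of $\phi$ at $\infty$ with an index in $\,]0,1[$. The equivalence between the two convergence statements labelled (ii) in the proposition (one for $\log X(t)$, one for $X(t)-1$) follows from $X(t)-1=\log X(t)\bigl(1+o(1)\bigr)$ in probability, which holds because $\log X(t)=\xi_{\tau(t)}\to 0$ in probability as $t\to 0+$ by the same time-change reasoning and the right-continuity of $\xi$ at $0$.

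The main obstacle is the clean transfer of the weak limit theorem for $\{h(t)\xi_t\}_{t\downarrow 0}$ to $\{h(t)\xi_{\tau(t)}\}_{t\downarrow 0}$ when $\tau$ is a random clock satisfying only $\tau(t)/t\to 1$ a.s.; Skorokhod's representation together with the uniform convergence theorem for the regularly varying $h$ is the most efficient route, while a tightness plus finite-dimensional distributions argument is a straightforward alternative. Once this transfer is in place, the identification of the limit as the stable law of index $\beta$ and of the normalization as $h(t)\sim\varphi(1/t)$ is a direct read-off from the subordinator statement.
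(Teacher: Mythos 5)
Your approach matches the paper's: the authors give no detailed proof here, only the hint in Remark~\ref{remark1} that $\tau(t)/t\to 1$ $\p_1$-a.s.\ as $t\to 0+$ reduces the short-time behaviour of $X$ to that of the subordinator, after which Theorem~3 of Section~III.3 in \cite{bertoinbook} is invoked. Your proposal fills in exactly that sketch, including the small-time clock estimate, the identification of the normalization $h(t)\sim\varphi(1/t)$, the stable limit of index $\beta$, and the passage between $\log X(t)$ and $X(t)-1$ via $e^u-1\sim u$ and $\log X(t)\to 0$ in probability.

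One technical point deserves more care than your Skorokhod-representation phrasing suggests. A single-time Skorokhod coupling of the weak limits of $h(s)\xi_s$ changes the probability space and destroys the pathwise link between $\tau$ and $\xi$, so one cannot simply substitute $s=\tau(t)$ afterwards. The cleanest transfer here exploits monotonicity: since both $\xi$ and $\tau$ are nondecreasing and $\tau(t)/t\to 1$ a.s., for any $\epsilon>0$ one has $\xi_{(1-\epsilon)t}\leq\xi_{\tau(t)}\leq\xi_{(1+\epsilon)t}$ on an event of probability tending to one, while $h(t)\xi_{(1\pm\epsilon)t}\Rightarrow(1\pm\epsilon)^{1/\beta}\sigma_1$ by regular variation of $h$ at $0$ with index $-1/\beta$; letting $\epsilon\downarrow 0$ squeezes out the limit $\sigma_1$. (Your alternative of upgrading to a functional limit in Skorokhod topology and applying continuous mapping at the a.s.\ continuity point $u=1$ also works and is what you were gesturing at.) With that repair the argument, including the converse direction via the same transfer and the classical characterization of admissible normalizations for $h(t)\xi_t$ at small times, is sound and coincides with the route the paper intends.
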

It is also possible to obtain an  analogue of Theorem~\ref{th:2}, which is a simple translation for pssMp of results such as those appearing in \cite{bertoinbook}~Section~III.4. A study of the short and large time behaviour of $X$ under $\p_{0+}$ has been done in~\cite{rivero2003} and \cite{CP}.  

To finish this section we mention that our Theorem \ref{th:1} has an interesting application to self-similar fragmentation processes that we next describe. We first provide a few definitions from fragmentation theory, and refer to the recent book ~\cite{bertoinFC} for background on it. First, we introduce the set
$$\mathcal{S}^{\downarrow}=\left\{\mathbf{s}=(s_{i})_{i\in\nr}: 1>s_{1}\geq s_{2}\geq\cdots\geq 0,\ \sum^{\infty}_{i=1}s_{i}=1 \right\}.$$ Let $Y=(Y(t),t\geq 0)$  denote a random process such that $$Y(t)=\left(Y_{1}(t),Y_{2}(t),\ldots\right)\in\mathcal{S}^{\downarrow},\quad \forall t\geq 0,\ \text{a.s.}$$ We suppose that $Y$ is continuous in probability and Markovian. For every $r\geq 0,$ we denote by $\mathbb{Q}_{r}$ the law of $Y$ started from the configuration $(r,0,\ldots),$ and assume that $\mathbb{Q}_{0}(Y_{1}(t)=0)=1.$ It is said that $Y$ is a self-similar fragmentation process if:
\begin{itemize}
\item There exists $\alpha\in\re,$ called index of self-similarity, such that for every $r>0$ the distribution under $\mathbb{Q}_{1}$ of the rescaled process $\left(rY(r^{\alpha}t),t\geq 0\right)$ is $\mathbb{Q}_{r}.$
\item For every $\mathbf{s}=(s_{1},s_{2},\ldots)\in\mathcal{S}^{\downarrow},$ if $(Y^{(i)}, i\in\nr)$ is a sequence of independent processes such that $Y^{(i)}$ has the law $\mathbb{Q}_{s_{i}},$ and if $\widetilde{Y}$ denotes the decreasing rearrangement of the family $(Y^{(i)}_{n}(t): i,n\in\nr),$ then $\widetilde{Y}=(\widetilde{Y}(t), t\geq 0)$ is a version of $Y$ started from configuration $\mathbf{s}.$ 
\end{itemize}
It is known that associated to $Y$ there exists a characteristic triple $(\kappa,c,\nu),$ where $\kappa\geq 0,$ is the killing rate, $c\geq 0$ is known as the erosion coefficient and $\nu$ is the so called splitting measure, which is a measure over $\mathcal{S^{\downarrow}}$ such that $$\int_{\mathcal{S}^{\downarrow}}\nu(\mathrm{d}s)(1-s_{1})<\infty.$$ Here we will only consider self-similar fragmentations with self-similarity index $\alpha>0,$ and killing and erosion coefficient $\kappa=0=c.$ In \cite{beasymp}, Bertoin studied under some assumptions the long time behaviour of the process $Y$ via an empirical probability measure carried, at each $t,$ by the components of $Y(t)$
\begin{equation}\label{empmeasure}\widetilde{\rho}_{t}(\mathrm{d}y)=\sum_{i\in \nr}Y_{i}(t)\delta_{t^{1/\alpha}Y_{i}(t)}(\mathrm{d}y),\qquad t\geq 0.\end{equation}  To be more precise, he proved that if the function $$\Phi(q):=\int_{\mathcal{S}^{\downarrow}}\left(1-\sum^{\infty}_{i=1}s^{q+1}_{i}\right)\nu(\mathrm{d}s),\qquad q>0,$$ is such that $m:=\Phi'(0+)<\infty,$ then the measure defined in (\ref{empmeasure}) converges in probability to a deterministic measure, say $\widetilde{\rho}_{\infty},$ which is completely determined by its entire moments 
$$\int^\infty_{0}x^{\alpha k}\widetilde{\rho}_{\infty}(\mathrm{d}x)=\frac{(k-1)!}{\alpha\mu \Phi(\alpha)\cdots \Phi(\alpha(k-1))},\qquad k=1,2,\ldots $$ with the assumption that the latter quantity equals $(\alpha m)^{-1},$ when $k=1.$ Bertoin proved this result by cleverly applying the results in \cite{BC2002} and the fact that there exists a $1/\alpha$-increasing positive self-similar Markov process, say $\widetilde{Z}= \left(\widetilde{Z}_{t}, t\geq 0\right),$ called the \textit{process of the tagged fragment} such that $\mathbb{Q}_{r}(\widetilde{Z}_{0}=r)=1,$ and for any bounded and measurable function $f:\re^+\to\re^+$
$$\mathbb{Q}_{1}\left(\widetilde{\rho}_{t}f\right)=\mathbb{Q}_{1}\left(\sum^\infty_{i=1}Y_{i}(t)f(t^{1/\alpha}Y_{i}(t))\right)=\mathbb{Q}_{1}\left(f\left(t^{1/\alpha}/\widetilde{Z}_{t}\right)\right),\qquad t\geq 0;$$ and that the process $\widetilde{Z}$ is an increasing $1/\alpha$--pssMp whose underlying subordinator has Laplace exponent $\Phi.$ Besides, it can be viewed, using the method of proof of Bertoin, that if $\Phi'(0+)=\infty,$ then the measure $\widetilde{\rho}_{t}$ converges in probability to the law of a r.v. degenerated at $0.$ This suggest that in the latter case, to obtain further information about the repartition of the components of $Y(t)$ it would be convenient to study a different form of the empirical measure of $Y.$ A suitable form of the empirical measure is given by the random probability measure $$\rho_{t}(\mathrm{d}y)=\sum^\infty_{i=1}Y_{i}(t)\delta_{\{\log(Y_{i}(t))/\log(t)\}}(\mathrm{d}y),\qquad t\geq 0.$$  The arguments provided by Bertoin are quite general and can be easily modified to prove the following consequence of Theorem \ref{th:1}, we omit the details of the proof.

\begin{cor}\label{cor:frag}
Let $Y$ be a self-similar fragmentation with self-similarity index $\alpha>0$ and characteristics $(0,0,\nu).$ Assume that the function $\Phi$ is regularly varying at $0$ with an index $\beta\in[0,1].$ Then, as $t\to\infty,$ the random probability measure $\rho_{t}(\mathrm{d}y)$ converges in probability towards the law of $-\alpha^{-1}-V,$ where $V$ is as in Theorem \ref{th:1}.  
\end{cor}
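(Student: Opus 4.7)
The plan is to adapt Bertoin's method from \cite{beasymp}, replacing his polynomial normalization $t^{1/\alpha}Y_i(t)$ by the logarithmic one $\log Y_i(t)/\log t$. It suffices to show, for every bounded continuous $f:\re\to\re$, that $\rho_t(f)\to\mathbb{E}[f(-1/\alpha-V)]$ in probability under $\mathbb{Q}_1$.

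For convergence in mean I would apply the tagged fragment identity to the test function $h(z):=f(\log z/\log t-1/\alpha)$, for which $h(t^{1/\alpha}Y_i(t))=f(\log Y_i(t)/\log t)$, to obtain
$$\mathbb{Q}_1\bigl(\rho_t(f)\bigr)=\mathbb{Q}_1\Bigl[f\Bigl(-\tfrac{\log\widetilde Z_t}{\log t}\Bigr)\Bigr].$$
Since the Laplace exponent $\Phi$ of the subordinator underlying $\widetilde Z$ is regularly varying at $0$ with index $\beta\in[0,1]$, Theorem~\ref{th:1} applied to $\widetilde Z$ yields $\log(\widetilde Z_t/t^{1/\alpha})/\log t\to V$ in law, hence $-\log\widetilde Z_t/\log t\to -1/\alpha-V$ in law, so $\mathbb{Q}_1(\rho_t(f))\to\mathbb{E}[f(-1/\alpha-V)]$ by boundedness and continuity of $f$.

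To upgrade to convergence in probability, I would condition at an auxiliary time $s>0$ and use the fragmentation property: conditionally on $\mathcal F_s$, the descendants of $Y_i(s)$ at time $t>s$ are distributed as $Y_i(s)\,Y^{(i)}((t-s)Y_i(s)^\alpha)$ for i.i.d.\ copies $Y^{(i)}$ of $Y$ under $\mathbb{Q}_1$. On the set $\{Y_i(s)>0\}$ one has $\log Y_i(s)/\log t\to 0$ and $\log((t-s)Y_i(s)^\alpha)/\log t\to 1$ as $t\to\infty$, so I expect the asymptotic decomposition
$$\rho_t(f)=\sum_i Y_i(s)\,\rho^{(i)}_{(t-s)Y_i(s)^\alpha}(f)+o_p(1),\qquad t\to\infty,$$
where the $\rho^{(i)}$ are i.i.d.\ copies of $\rho$ independent of $\mathcal F_s$. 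The first step, together with a conditional second-moment estimate $\mathrm{Var}(\rho_t(f)\mid\mathcal F_s)\leq\|f\|_\infty^2\sum_i Y_i(s)^2$ and the convergence $\sum_i Y_i(s)^2\to 0$ in probability as $s\to\infty$ (the fragmentation pulverizes when $\alpha>0$), combined with conservativeness $\sum_i Y_i(s)=1$ and taking limits first in $t$ and then in $s$, completes the proof.

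The main obstacle is the rigorous justification of the displayed decomposition: the logarithm does not commute with the scaling of the fragmentation, so one must control uniformly in $i$ the replacement of $\log Y_i(t)/\log t$ by $\log Y^{(i)}_j((t-s)Y_i(s)^\alpha)/\log((t-s)Y_i(s)^\alpha)$. Truncating at $Y_i(s)\geq\varepsilon$, where only finitely many fragments survive since $\sum_i Y_i(s)=1$, gives uniform convergence on the main part so that uniform continuity of $f$ suffices, while the contribution of the remainder is bounded by $\|f\|_\infty\sum_{i:Y_i(s)<\varepsilon}Y_i(s)$ and can be made arbitrarily small in probability by letting $\varepsilon\to 0$ after $s\to\infty$.
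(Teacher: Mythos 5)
Your proposal is correct and is precisely the route the paper intends: the proof of Corollary \ref{cor:frag} is omitted in the text, which states only that Bertoin's arguments from \cite{beasymp} ``can be easily modified,'' and your two-step scheme --- convergence in mean via the tagged-fragment identity combined with Theorem \ref{th:1} applied to $\widetilde{Z}$, followed by the branching decomposition at an intermediate time $s$ with the conditional second-moment bound $\|f\|_\infty^2\sum_i Y_i(s)^2\to 0$ --- is exactly that modification. The one point worth making explicit is that the test functions should be taken continuous on the compactification $[-\infty,-1/\alpha]$ of the support: this is needed both to interpret the limit when $\beta=0$ (where $V=\infty$ a.s.) and to absorb the error term $\log Y^{(i)}_j(u)\bigl(1/\log t-1/\log u\bigr)$ coming from extremely small descendant fragments, for which uniform continuity of $f$ on $\re$ alone is not quite enough.
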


This result extend to the case with infinite mean some results in Brennan and Durrett in~\cite{Brennandurrett2,Brennandurrett1}. This will be made precise in Section \ref{sect:frag}, where we will provide necessary conditions on the splitting measure for the function $\Phi$ to satisfy the hypotheses in Corollary \ref{cor:frag}.

The rest of this note is devoted to prove the results stated before. The document is organized so that each subsequent Section contains a proof. The last section of this work is constituted of a comparison of the results here obtained and those known that describe the behaviour of the underlying subordinator.  

\end{section}

\begin{section}{Proof of Proposition~\ref{prop:1}}\label{proofprop:1}
Assume that the mean of $\xi$ is finite, $m:=\er(\xi_1)<\infty$.
According to the result of Bertoin and Caballero there exists a
measure $\p_{0+}$ on the space of c\`adl\`ag paths defined over
$]0,\infty[$ that takes only positive values, under which the
canonical process is a strong Markov process with the same semigroup
as $X$ and its entrance law can be described by means of the
exponential functional
$I=\int^{\infty}_0\exp\{-\alpha\xi_s\}\mathrm{d}s,$ by the formula
\begin{equation*}
\e_{0+}\left(f(X(t))\right)=\frac{1}{\alpha
m}\er\left(f\left(\left(t/I\right)^{1/\alpha}\right)\frac{1}{I}\right),\qquad
t>0,
\end{equation*} for any measurable function $f:\re^+\to\re^+.$
A straightforward consequence of the scaling property is that the
process of the Ornstein-Uhlenbeck type $U$ defined by
$$U_t=e^{-t/\alpha}X(e^t),\qquad t\in\re,$$ under $\e_{0+}$ is a strictly stationary
process. This process has been studied by Carmona, Petit and
Yor~\cite{cpy97} and by Rivero in~\cite{rivero2003}. Therein, is
proved that $U$ is a positive recurrent and strong Markov process.
Observe that the law of $U_0$ under $\e_{0+},$ is given by the
probability measure $\mu$ defined in~(\ref{eq:limitlaw}). By the ergodic theorem we have that
$$\frac{1}{t}\int^{t}_0f(U_s)\mathrm{d}s\xrightarrow[t\to\infty]{}\e_{0+}\left(f(U_0)\right)=\mu(f),\quad \p_{0+}-\text{a.s.}$$
for every function $f\in L^{1}(\mu)$. Observe that a change
of variables $u=e^{s}$ in the latter equation allows to deduce that
$$\frac{1}{\log(t)}\int^{t}_1f(u^{-1/\alpha}X(u))\frac{\mathrm{d}u}{u}=\frac{1}{\log(t)}\int^{\log(t)}_0f(U_s)\mathrm{d}s\xrightarrow[t\to\infty]{}\e_{0+}\left(f(U_0)\right),\quad \p_{0+}-\text{a.s.}$$

Now to prove the second assertion of Proposition~\ref{prop:1} we use
the well known fact that
$$\lim_{t\to\infty}\frac{\xi_t}{t}=m,\qquad
\pr-\text{a.s.}$$ So, to prove the result it will be sufficient to
establish that
\begin{equation}\label{equation:convtau}\tau(t)/\log(t)\xrightarrow[t\to\infty]{}1/m\alpha,\qquad\pr-\text{a.s.}\end{equation}
Because in that case
$$\frac{\log(X(t))}{\log(t)}=\frac{\xi_{\tau(t)}}{\tau(t)}\frac{\tau(t)}{\log(t)}\xrightarrow[t\to\infty]{}m/\alpha m,\qquad\pr-\text{a.s.}$$
Now, a simple consequence of Lamperti's transformation is that
$$\tau(t)=\int^{t}_0X^{-\alpha}_s\mathrm{ds}=\int^{t}_0\left(s^{-1/\alpha}X(s)\right)^{-\alpha}\frac{\mathrm{ds}}{s},\qquad t\geq 0.$$
So, the result just proved applied to the function
$f(x)=x^{-\alpha},$ $x>0,$ leads
$$\frac{1}{\log(1+t)}\int^{1+t}_1\left(u^{-1/\alpha}X(u)\right)^{-\alpha}\frac{\mathrm{d}u}{u}\xrightarrow[t\to\infty]{}1/\alpha m,\quad \p_{0+}-\text{a.s.}$$
Denote by $\mathcal{H}$ the set were the latter convergence holds.
By the Markov property it is clear that
$$\p_{0+}\left(\p_{X(1)}\left(\frac{1}{\log(1+t)}\int^{t}_0\left(u^{-1/\alpha}X(u)\right)^{-\alpha}\frac{\mathrm{d}u}{u}\nrightarrow 1/\alpha m\right)\right)=\p_{0+}\left(\mathcal{H}^{c}\right)=0.$$
So for $\p_{0+}$--almost every $x>0,$
$$\p_{x}\left(\frac{1}{\log(1+t)}\int^{t}_0\left(u^{-1/\alpha}X(u)\right)^{-\alpha}\frac{\mathrm{d}u}{u}\xrightarrow[t\to\infty]{}1/\alpha m\right)=1.$$
For such an $x,$ it is a consequence of the scaling property that
$$\frac{1}{\log(1+t)}\int^{t}_0\left(u^{-1/\alpha}xX(ux^{-\alpha})\right)^{-\alpha}\frac{\mathrm{d}u}{u}\xrightarrow[t\to\infty]{}1/\alpha m,\qquad \p_{1}-\text{a.s.}$$ Therefore, by making a change of variables $s=ux^{-\alpha}$ and using the fact that $\frac{\log(1+tx^{-\alpha})}{\log(t)}\to 1,$
as $t\to\infty$, we prove that (\ref{equation:convtau}) holds. Which in view of the previous comments finish the
proof of the second assertion in Proposition~\ref{prop:1}.

\begin{remark}\label{remergodicthm} In the case where the mean is
infinite, $\er(\xi_1)=\infty,$ we can still construct a measure $N$
with all but one of the properties of $\p_{0+};$ the missing property
is that $N$ is not a probability measure, it is in fact a
$\sigma$-finite, infinite measure. The measure $N$ is constructed
following the methods used by Fitzsimmons~\cite{Fitzsimmonsrecext}, the details of this construction are beyond the scope of this note so we omit them. Thus, using results from the infinite
ergodic theory (see e.g. \cite{Aaronson} Section 2.2) it can be
verified that
$$\frac{1}{\log(t)}\int^{t}_0f(s^{-1/\alpha}X(s))\frac{\mathrm{d}s}{s}\xrightarrow[t\to\infty]{}0,\quad
\p_{1}-\text{a.s.}$$ for every function $f$ such that
$N(|f(X(1))|)=\mu|f|<\infty;$ in particular for $f(x)=x^{-\alpha},$ $x>0.$
\end{remark}
\end{section}

\begin{section}{Proof of Theorem~\ref{th:1}}\label{proofth:1}
The proof of Theorem~\ref{th:1} follows the method of proof in~\cite{BC2002}. So, here we will first explain how the auxiliary Lemmas and Corollaries in~\cite{BC2002} can be extended in our setting and then we will apply those facts to prove the claimed results. 

We start by introducing some notation. We define the processes of the age and rest of life associated to the subordinator $\xi,$
$$(A_t, R_t)=(t-\xi_{L(t-)},\xi_{L(t)}-t),\qquad t\geq 0,$$ where
$L(t)=\inf\{s>0 : \xi_s>t\}.$  The methods used by Bertoin and
Caballero are based on the fact that if the mean $\er(\xi_1)<\infty$
then the random variables $(A_t, R_t)$ converge weakly to a
non-degenerated random variable $(A,R)$ as the time tends to
infinity. In our setting, $\er(\xi_1)=\infty,$ the random variables
$(A_t, R_t)$ converge weakly towards $(0,\infty).$ Nevertheless, if
the Laplace exponent $\phi$ is regularly varying at $0$ then
$(A_t/t, R_t/t)$ converge weakly towards a non-degenerated random
variable $(U,O)$ (see e.g. Theorem~3.2 in~\cite{bertoinsub} or
Lemma~\ref{lemma:bc2} below, for a precise statement). This fact,
known as the Dynkin-Lamperti theorem, is the clue to solve our
problem.

The following results can be proved with little effort following Bertoin and Caballero. For $b>0,$ let $T_{b}$ be the first entry time into $]b,\infty[$ for $X,$ viz. $T_{b}=\inf\{s>0 : X(s)>b\}.$

\begin{lemma}\label{lemma:bc1}
Fix $0<x<b.$ The distribution of the pair $(T_{b}, X(T_b))$ under
$\p_x$ is the same as that of
$$\left(b^{\alpha}\exp\{-\alpha A_{\log\left( b/x \right)}\}\int^{L(\log\left( b/x \right))}_0\exp\{-\alpha\xi_s\}\mathrm{d}s, b \exp\{R_{\log(b/x)}\} \right).$$
\end{lemma}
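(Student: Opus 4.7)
The plan is to use Lamperti's transformation to rewrite $T_b$ and $X(T_b)$ directly in terms of $\xi$, and then reduce the claim to a time-reversal identity for the pre-passage trajectory of the subordinator. First, under $\p_x$ one has $X(t)=x\exp\{\xi_{\tau(t/x^{\alpha})}\},$ where $\tau$ is the continuous strictly increasing inverse of $C_s=\int_0^s e^{\alpha\xi_u}\,\mathrm{d}u$. Setting $u=\log(b/x)$, the event $\{X(t)>b\}$ is equivalent to $\{\xi_{\tau(t/x^{\alpha})}>u\}$, and the continuity of $\tau$ combined with the definition of $L(u)$ yields $\tau(T_b/x^{\alpha})=L(u)$. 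Consequently,
$$T_b = x^{\alpha}\int_0^{L(u)} e^{\alpha\xi_s}\,\mathrm{d}s, \qquad X(T_b)=x\,e^{\xi_{L(u)}}=x\,e^{u+R_u}=b\,e^{R_u},$$
which already delivers the second coordinate of the stated identity.

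For the first coordinate, the relation $\xi_{L(u)-}=u-A_u$ gives $x^{\alpha}e^{\alpha\xi_{L(u)-}}=b^{\alpha}e^{-\alpha A_u}$, so the lemma reduces to the following distributional identity, taken jointly with $(A_u,R_u)$:
$$\int_0^{L(u)} e^{\alpha\xi_s}\,\mathrm{d}s \ \stackrel{\text{Law}}{=}\ e^{\alpha\xi_{L(u)-}}\int_0^{L(u)} e^{-\alpha\xi_s}\,\mathrm{d}s.$$

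To prove this, I would perform the change of variable $s\mapsto L(u)-s$ in the integral on the left, which rewrites it as $e^{\alpha\xi_{L(u)-}}\int_0^{L(u)} e^{-\alpha\widehat{\xi}_s}\,\mathrm{d}s$ with the reversed path $\widehat{\xi}_s:=\xi_{L(u)-}-\xi_{(L(u)-s)-}$. The question is then reduced to showing that, jointly with $(L(u),A_u,R_u)$, the reversed trajectory $(\widehat{\xi}_s)_{0\le s<L(u)}$ has the same distribution as $(\xi_s)_{0\le s<L(u)}$. This is the main obstacle, because $L(u)$ is a random time. The standard route is to decompose the expectation via the compensation formula applied to the Poisson point process of jumps of $\xi$, isolating the jump at $L(u)$ (which has size $A_u+R_u$), and then to apply the classical L\'evy--It\^o duality at a deterministic time to the unconditioned trajectory before that jump. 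Substituting the resulting equality in law back into the expression for $T_b$ obtained in the first step will complete the identification.
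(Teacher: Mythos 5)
Your argument is correct and is essentially the proof of Corollary~5 in Bertoin and Caballero \cite{BC2002}, which is precisely what the paper invokes for this lemma (adding only that no moment hypothesis is needed there): Lamperti's transformation identifies $\tau(T_b x^{-\alpha})=L(u)$ and reduces everything to the time-reversal identity for the pre-$L(u)$ trajectory taken jointly with $(A_u,R_u)$, which is then obtained from the compensation formula for the jump process of $\xi$ together with the fixed-time duality lemma. The one point to watch when writing out that last step is that $\xi$ may creep across the level $u$ when the drift is positive, in which case there is no jump at $L(u)$ and $A_u=R_u=0$, so the decomposition must carry a separate creeping term alongside the Poissonian sum over jumps.
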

This result was obtained in~\cite{BC2002} as Corollary 5 and still is true under our assumptions because its proof holds without any hypothesis on the mean of the underlying subordinator. Now, using the latter result, the arguments in the proof of Lemma 6 in~\cite{BC2002}  and the Dynkin-Lamperti Theorem for subordinators we obtain the following result. 
\begin{lemma}\label{lemma:bc2}
Assume that the Laplace exponent $\phi$ of the subordinator $\xi$ is
regularly varying at infinity with index $\beta\in[0,1].$ Let
$F:\mathbf{D}_{[0,s]}\to \re$ and $G:\re^{2}_+\to\re$ be measurable
and bounded functions. Then
\begin{equation}
\lim_{t\to\infty}\er\left(F\left(\xi_r, r\leq
s\right)G\left(\frac{A_t}{t},
\frac{R_t}{t}\right)\right)=\er\left(F(\xi_r, r\leq
s)\right)\er\left(G\left(U, O\right)\right),
\end{equation}
Where $(U,O)$ is a $[0,1]\times[0,\infty]$ valued random variable
whose law is determined as follows: if $\beta=0$ (resp. $\beta=1$),
it is the Dirac mass at $(1,\infty)$ (resp. at $(0,0))$. For
$\beta\in]0,1[,$ it is the distribution with density
$$p_{\beta}(u,w)=\frac{\beta\sin\beta\pi}{\pi}(1-u)^{\beta-1}(u+w)^{-1-\beta},\qquad
0<u<1, w>0.$$
\end{lemma}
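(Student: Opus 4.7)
The plan is to reduce the joint convergence statement to the marginal Dynkin-Lamperti theorem for subordinators (Theorem~3.2 in~\cite{bertoinsub}), which under the present regular variation hypothesis on $\phi$ already yields $(A_t/t, R_t/t) \Rightarrow (U, O)$ with the density $p_\beta$ stated in the lemma. The task reduces to proving asymptotic independence: the functional $F(\xi_r, r \le s)$, which depends only on $\mathcal{F}_s := \sigma(\xi_r, r \le s)$, becomes asymptotically independent of $(A_t/t, R_t/t)$ as $t \to \infty$, because for $t$ large the age and rest-of-life depend only on the post-$s$ behaviour of $\xi$.

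To make this precise, I would introduce $\xi'_u := \xi_{s+u} - \xi_s$, which is independent of $\mathcal{F}_s$ and equidistributed with $\xi$. On the event $\{\xi_s < t\}$, a direct computation from the definitions of $L, A, R$ (together with the fact that $\xi_s \le \xi_{L(t-)}$ on this event) gives
\begin{equation*}
L(t) = s + L'(t - \xi_s), \qquad A_t = A'_{t-\xi_s}, \qquad R_t = R'_{t-\xi_s},
\end{equation*}
where $L', A', R'$ are the analogous objects built from $\xi'$. Hence, on $\{\xi_s < t\}$,
\begin{equation*}
\left(\frac{A_t}{t},\frac{R_t}{t}\right) = \left(1 - \frac{\xi_s}{t}\right)\left(\frac{A'_{t-\xi_s}}{t-\xi_s},\frac{R'_{t-\xi_s}}{t-\xi_s}\right).
\end{equation*}

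Conditioning on $\mathcal{F}_s$ and using the independence of $\xi'$ from $\mathcal{F}_s$, the left-hand side of the claim becomes, up to an error of order $\pr(\xi_s \ge t)\|F\|_\infty\|G\|_\infty = o(1)$,
\begin{equation*}
\er\bigl(F(\xi_r, r\le s)\, H(\xi_s, t)\bigr), \qquad H(x,t) := \er\!\left[G\!\left(\bigl(1-\tfrac{x}{t}\bigr)\tfrac{A'_{t-x}}{t-x},\,\bigl(1-\tfrac{x}{t}\bigr)\tfrac{R'_{t-x}}{t-x}\right)\right].
\end{equation*}
For each fixed $x \ge 0$, as $t \to \infty$ the scaling factor $(1 - x/t) \to 1$ and the marginal Dynkin-Lamperti convergence for $\xi'$ gives $H(x,t) \to \er(G(U,O))$; the convergence is uniform over $x$ in compacts, and a monotone class argument together with the absolute continuity of the limit law extends it from bounded continuous to bounded measurable $G$. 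Dominated convergence, applied using the a.s.\ finiteness of $\xi_s$, then yields
\begin{equation*}
\lim_{t\to\infty}\er\bigl(F(\xi_r, r\le s)\,G(A_t/t, R_t/t)\bigr) = \er\bigl(F(\xi_r, r\le s)\bigr)\,\er\bigl(G(U,O)\bigr),
\end{equation*}
as required.

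The main obstacle is the extreme cases $\beta \in \{0,1\}$: the limit law degenerates at $(1,\infty)$ and $(0,0)$ respectively, so convergence of $\er(G(\cdot))$ for bounded measurable $G$ is not automatic. In both degenerate cases, however, the marginal Dynkin-Lamperti result amounts to convergence in probability of $(A_t/t, R_t/t)$ to the corresponding constant, and the boundedness of $G$ together with a standard truncation (treating $R_t/t$ on a fixed large interval and controlling the tail uniformly via the regular variation of $\phi$) yields the required expectation convergence. Apart from this technical point, the argument is a direct transcription of the method used in Corollary~5 and Lemma~6 of~\cite{BC2002}, simply replacing the classical renewal theorem by its Dynkin-Lamperti counterpart.
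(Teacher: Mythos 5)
Your argument is correct in its essentials and matches the paper's approach. The paper does not give a detailed proof of this lemma: it simply states that it follows by the arguments in Lemma~6 of \cite{BC2002}, with the key renewal theorem replaced by the Dynkin--Lamperti theorem. What you have written out is precisely that argument: the restart decomposition $\xi'_u = \xi_{s+u}-\xi_s$, the identities $L(t)=s+L'(t-\xi_s)$, $A_t=A'_{t-\xi_s}$, $R_t=R'_{t-\xi_s}$ on $\{\xi_s<t\}$, conditioning on $\mathcal{F}_s$ (so that $F$ factors out), the marginal Dynkin--Lamperti convergence applied to $\xi'$, and dominated convergence using the a.s.\ finiteness of $\xi_s$ and $\pr(\xi_s\ge t)\to 0$. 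The handling of the degenerate indices $\beta\in\{0,1\}$ via convergence in probability and truncation is also the right way to treat those cases.

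One step does not hold as written. You claim that ``a monotone class argument together with the absolute continuity of the limit law extends it from bounded continuous to bounded measurable $G$.'' Weak convergence to an absolutely continuous law does not yield convergence of $\er(G(\cdot))$ for arbitrary bounded Borel $G$: for a counterexample, take $X_n$ uniform on $\{k/n : 1\le k\le n\}$, which converges weakly to the uniform law on $(0,1)$, while $G=\ind_{\mathbb{Q}}$ gives $\er(G(X_n))=1\not\to 0$. To get the conclusion for all bounded measurable $G$ you would need convergence of $(A_t/t,R_t/t)$ in total variation, e.g.\ via pointwise convergence of densities and Scheff\'e, which requires additional regularity of the potential and L\'evy measures and is not an automatic consequence of regular variation of $\phi$. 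This over-reach is, however, already present in the paper's own formulation of the lemma (``measurable and bounded''), and in the downstream applications $G$ is always continuous a.e.\ with respect to the limit law, so the result as actually used is proved correctly by your argument; it would be cleaner to state the lemma for $G$ bounded and a.e.\ continuous with respect to the law of $(U,O)$.
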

Finally, using arguments similar to those provided in the proof of Corollary 7 in ~\cite{BC2002} we deduce from the latter and former results the following Lemma.
\begin{lemma}\label{lemma:bc3}
Assume that the Laplace exponent $\phi$ of the subordinator $\xi$ is
regularly varying at infinity with index $\beta\in[0,1].$ Then as
$t$ tends to $\infty$ the triplet
$$\left( \int^{L(t)}_{0}\exp\{-\alpha\xi_s\}\mathrm{d}s, \frac{A_t}{t},
\frac{R_t}{t}\right)$$ converges in distribution towards
$$\left( \int^{\infty}_{0}\exp\{-\alpha\xi_s\}\mathrm{d}s, U, O\right),$$
Where $\xi$ is independent of the pair $(U, O)$ which has the law
specified in the Lemma~\ref{lemma:bc2}.
\end{lemma}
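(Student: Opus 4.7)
\textbf{Proof plan for Lemma \ref{lemma:bc3}.} The plan is to combine Lemma \ref{lemma:bc2} with an approximation of the truncated exponential functional $\int_0^{L(t)} e^{-\alpha \xi_s} \mathrm{d}s$ by its finite-horizon cut-off $\int_0^s e^{-\alpha \xi_r} \mathrm{d}r$, and then send $s\to\infty$ via a Slutsky/converging-together argument.

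First, I would record two preliminary facts. Since $\xi$ is a subordinator with no killing, $\xi_t\to\infty$ $\pr$-a.s., so its right-continuous inverse satisfies $L(t)\to\infty$ $\pr$-a.s.\ as $t\to\infty$; moreover the exponential functional $I_\phi:=\int_0^\infty e^{-\alpha\xi_s}\mathrm{d}s$ is $\pr$-a.s.\ finite. It follows by monotone convergence that
$$\int_0^{L(t)} e^{-\alpha\xi_s}\mathrm{d}s \;\xrightarrow[t\to\infty]{\pr\text{-a.s.}}\; I_\phi.$$
Next, fix $s>0$. For any bounded continuous $h:\re^+\to\re$, the map $F_h(\xi_r, r\leq s):=h\bigl(\int_0^s e^{-\alpha\xi_r}\mathrm{d}r\bigr)$ is a bounded continuous functional on the path space up to time $s$. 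Applying Lemma \ref{lemma:bc2} with $F=F_h$ and an arbitrary bounded continuous $G$ on the (appropriately compactified) state space of $(A_t/t, R_t/t)$ yields the joint weak convergence
$$\left(\int_0^s e^{-\alpha\xi_r}\mathrm{d}r,\ \frac{A_t}{t},\ \frac{R_t}{t}\right)\;\xrightarrow[t\to\infty]{d}\;\bigl(I_s,\, U',\, O'\bigr),$$
where $I_s:=\int_0^s e^{-\alpha\xi_r}\mathrm{d}r$, the pair $(U',O')$ has the law of $(U,O)$ described in Lemma \ref{lemma:bc2}, and $(U',O')$ is independent of $I_s$ in the limit.

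Then I would control the approximation error uniformly in $t$. On the event $\{L(t)\geq s\}$, whose probability tends to $1$ as $t\to\infty$ for each fixed $s$,
$$0\;\leq\;\int_0^{L(t)}e^{-\alpha\xi_u}\mathrm{d}u - \int_0^s e^{-\alpha\xi_u}\mathrm{d}u \;=\; \int_s^{L(t)}e^{-\alpha\xi_u}\mathrm{d}u \;\leq\; \int_s^\infty e^{-\alpha\xi_u}\mathrm{d}u,$$
and the right-hand side decreases to $0$ $\pr$-a.s.\ as $s\to\infty$ by the a.s.\ finiteness of $I_\phi$. Hence for every $\varepsilon>0$,
$$\lim_{s\to\infty}\limsup_{t\to\infty}\pr\!\left(\left|\int_0^{L(t)}e^{-\alpha\xi_u}\mathrm{d}u - I_s\right|>\varepsilon\right)=0.$$

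Finally, combining the weak convergence of the truncated triplet established in the second step with the uniform tail estimate of the third step, a standard converging-together argument gives the desired joint weak convergence of
$$\left(\int_0^{L(t)}e^{-\alpha\xi_u}\mathrm{d}u,\,\frac{A_t}{t},\,\frac{R_t}{t}\right)$$
to $(I_\phi, U, O)$, with $I_\phi$ independent of $(U,O)$ in the limit. The main obstacle is coordinating the two limits $s\to\infty$ and $t\to\infty$ while preserving joint convergence; this is settled cleanly by the uniform tail bound above. Some additional care is required at the boundary cases $\beta\in\{0,1\}$, where $(U,O)$ is degenerate at $(1,\infty)$ or $(0,0)$ and $R_t/t$ must be viewed on the one-point compactification $[0,\infty]$, but this does not affect the structure of the argument.
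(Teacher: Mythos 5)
Your proof is correct and follows exactly the route the paper intends: the paper simply defers to the proof of Corollary 7 in \cite{BC2002}, which is precisely this argument of applying Lemma~\ref{lemma:bc2} with $F = h\bigl(\int_0^s e^{-\alpha\xi_r}\mathrm{d}r\bigr)$ to get joint convergence of the truncated functional together with $(A_t/t, R_t/t)$, and then invoking the a.s.\ monotone convergence $\int_0^{L(t)}e^{-\alpha\xi_u}\mathrm{d}u\uparrow I_\phi$ and a converging-together (Billingsley) lemma to pass to the full functional. The only cosmetic point is that Lemma~\ref{lemma:bc2} holds for bounded measurable $F$ and $G$, so the continuity of the truncated-functional map on Skorohod space need not be checked; aside from that, the argument is complete as stated.
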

We have the necessary tools to prove Theorem~\ref{th:1}.
\begin{proof}[Proof of Theorem~\ref{th:1}]
Let $c>-1,$ and $b(x)=e^{c\log(1/x)},$ for $0<x<1.$ In the case where $\beta=1$ we will furthermore assume that $c\neq 0$ owing that in this setting $0$ is a point of discontinuity for the distribution of $U.$ The elementary relations
$$\log\left(b(x)/x\right)=(c+1)\log(1/x),\qquad \log\left(b(x)/x^2\right)=(c+2)\log(1/x),\qquad 0<x<1,$$
will be useful.
The following equality in law follows from Lemma~\ref{lemma:bc1} 
\begin{equation}\label{eq:equalityinlaw}
\begin{split}
&\left(\frac{\log\left(T_{b(x)/x}\right)}{\log\left(1/x\right)}, \frac{\log\left(X\left(T_{b(x)/x}\right)\right)}{\log(1/x)}\right)\stackrel{\text{Law}}{=}\\
&\left( \frac{\alpha\log(b(x)/x)-\alpha A_{\log\left(b(x)/x^2\right)}+\log\left(\int^{L\left(\log\left( b(x)/x^2 \right)\right)}_0\exp\{-\alpha\xi_s\}\mathrm{d}s\right)}{\log\left(1/x\right)}, \frac{\log\left(b(x)/x\right)+R_{\log\left(b(x)/x^2\right)}}{\log\left(1/x\right)}\right),
\end{split}
\end{equation}for all $0<x<1.$ Moreover, observe that the r.v. $\int^{L(r)}_0\exp\{-\alpha\xi_s\}\mathrm{d}s$ converges almost surely to $\int^{\infty}_0\exp\{-\xi_s\}\mathrm{d}s,$ as $r\to\infty;$ and that for any $t>0$ fixed,
$$\p_1\left(\frac{\log\left(x X(tx^{-\alpha})\right)}{\log(1/x)}>c\right)=\p_1\left(x X(tx^{-\alpha})>b(x)\right),\qquad 0<x<1,$$
\begin{equation}\label{eq:importantineq1}
\begin{split}
\p_1(T_{b(x)/x}<tx^{-\alpha})&\leq \p_1(xX(tx^{-\alpha})>b(x))\\
&\leq\p_1(T_{b(x)/x}\leq tx^{-\alpha})\leq \p_1(xX(tx^{-\alpha})\geq b(x)),\quad 0<x<1.
\end{split}
\end{equation}
Thus, under the assumption of regular variation at $0$ of $\phi,$  the equality in law in (\ref{eq:equalityinlaw}) combined with the result in Lemma~\ref{lemma:bc3} lead to the weak convergence
\begin{equation}\label{eq:weakconv}
\left(\frac{\log(T_{b(x)/x})}{\log(1/x)},
\frac{\log\left(X\left(T_{b(x)/x}\right)\right)}{\log(1/x)}\right)\xrightarrow[x\to
0+]{\textrm{D}}\left(\alpha\left[c+1-(c+2)U\right],
c+1+(c+2)O\right).
\end{equation}
As a consequence we get
$$\p_1(T_{b(x)/x}<tx^{-\alpha})=\p_1\left(\frac{\log\left(T_{b(x)/x}\right)}{\log\left(1/x\right)}<\frac{\log(t)}{\log\left(1/x\right)}+\alpha\right)\xrightarrow[x\to 0+]{}\pr\left(\frac{c}{c+2}< U\right),$$ for  $c>-1.$
Which in view of the first two inequalities in (\ref{eq:importantineq1}) shows that for any $t>0$ fixed
$$\p_1\left(\frac{\log\left(x
X(tx^{-\alpha})\right)}{\log(1/x)}>c\right)\xrightarrow[x\to\
0+]{\textrm{}}\pr\left(\frac{c}{c+2} < U\right),$$
for $c>-1,$ and we have so proved that (i) implies (ii).

\noindent Next, we prove that (ii) implies (i). If (ii) holds then
\begin{equation*}
\p_1\left(\frac{\log\left(x
X(tx^{-\alpha})\right)}{\log(1/x)}>c\right)\xrightarrow[x\to\
0+]{\textrm{}}\pr\left( V > c\right),
\end{equation*}for every $c>-1$ point of continuity of the distribution of $V.$
Using this and the second and third inequalities in (\ref{eq:importantineq1}) we obtain that
\begin{equation*}
\p_1\left(\frac{\log\left(T_{b(x)/x}\right)}{\log\left(1/x\right)}<\frac{\log(t)}{\log\left(1/x\right)}+\alpha\right)\xrightarrow[x\to 0+]{}\pr\left(c< V\right).
\end{equation*}
Owing to the equality in law~(\ref{eq:equalityinlaw}) we have that
\begin{equation}\label{eq:wlimitundershoot}
\begin{split}
&\pr\left(c< V\right)\\
&=\lim_{x\to 0+}\pr\left( \frac{\alpha\log(b(x)/x)-\alpha A_{\log\left(b(x)/x^2\right)}+\log\left(\int^{L\left(\log\left( b(x)/x^2 \right)\right)}_0\exp\{-\alpha\xi_s\}\mathrm{d}s\right)}{\log\left(1/x\right)}<\frac{\log(t)}{\log\left(1/x\right)}+\alpha\right)\\
&=\lim_{x\to 0+}\pr\left( \alpha(c+1)-\frac{\alpha(c+2) A_{\log\left(b(x)/x^2\right)}}{\log\left(b(x)/x^2\right)}<\alpha\right)\\
&=\lim_{z\to \infty}\pr\left( \frac{A_{z}}{z}>\frac{c}{c+2}\right)
\end{split}
\end{equation}
So we can ensure that if (ii) holds then $A_{z}/z$ converges weakly, as $z\to \infty,$ which is well known to be equivalent to the regular variation at $0$ of the Laplace exponent $\phi,$ see e.g.~\cite{bertoinbook} Theorem~III.2. We have so proved that (ii) implies (i).

\noindent To finish, observe that if (i) holds with $\beta=0,$ it is
clear that $V=\infty$ a.s. given that in this case $U=1$ a.s. In
the case where (i) holds with $\beta\in]0,1]$ the limit r.v. $V$ has
the same law as the random variable $2U/\alpha(1-U),$ and an
elementary calculation proves that $V$ has the law described in
Theorem \ref{th:1}.
\end{proof}

\begin{remark}\label{remark:degeneracy}
Observe that if in the previous proof we replace the function $b$ by
$b'(x,a)=ae^{c\log(1/x)},$ for $a>0,$ $c>-1$ and  $0<x<1,$ then
$$\p_{1}(x^{1+c}X(x^{-\alpha})>a)=\p_x(X(1)>b'(x,a))=\p_1\left(\frac{\log\left(x
X(x^{-\alpha})\right)}{\log(1/x)}>c+\frac{a}{\log(1/x)}\right),$$
and therefore the limit of the latter quantity
does not depend on $a,$ as $x$ goes to $0+.$ That is for each $c>-1$ we have the weak
convergence of r.v.
$$x^{1+c}X(x^{-\alpha})\xrightarrow[x\to 0]{\textrm{D}}Y(c),$$ and $Y(c)$ is an $\{0,\infty\}$-valued random variable whose law is given  by
$$\p(Y(c)=\infty)=\p\left(\frac{c}{c+2}<U\right), \qquad \p(Y(c)=0)=\p\left(\frac{c}{c+2}\geq U\right).$$
Therefore, we can ensure that the asymptotic behaviour of $X(t)$ is not of the order $t^{a}$ for any $a>0,$ as $t\to\infty.$
\end{remark}
\end{section}
\begin{section}{Proof of Theorem~\ref{teo:1bis}}\label{proofth:1bis}
Assume that the Laplace exponent of $\xi$ is not regularly varying at $0$ with a strictly positive index. Let $h:\re^{+}\to]0,\infty[$ be a slowly varying function such that $h(t)\to\infty$ as $t\to \infty,$ and define$f(x)=h(x^{-\alpha}),$ $0<x<1.$ Assume that $h$, and so $f,$ are such that 
\begin{equation}\label{convergenciacontradiccion}\frac{\log(xX(x^{-\alpha}))}{f(x)}\xrightarrow[x\to0+]{\text{Law}}V,\end{equation} where $V$ is an a.s. non-degenerated, finite and positive valued random variable. For $c$ a continuity point of $V$ let $b_{c}(x)=\exp\{cf(x)\},$ $0<x<1.$ We have that $$\p_{1}\left(\frac{\log(xX(x^{-\alpha}))}{f(x)}>c\right)\xrightarrow[x\to 0+]{}\p(V>c).$$ Arguing as in the proof of Theorem~\ref{th:1} it is proved that the latter convergence implies that \begin{equation*}
\p_1\left(\frac{\log\left(T_{b_{c}(x)/x}\right)}{\log\left(1/x\right)}\leq\alpha\right)\xrightarrow[x\to 0+]{}\pr\left(V>c\right).
\end{equation*}
Using the identity in law (\ref{eq:equalityinlaw}) and arguing as in equation (\ref{eq:wlimitundershoot}) it follows that the latter convergence implies that 
\begin{equation}\label{eq:limit2}
\begin{split}
\pr(V> c)&=\lim_{x\to 0+}\pr\left(\frac{A_{\log(b_{c}(x)/x^2)}}{f(x)}\geq c\right)\\
&=\lim_{x\to 0+}\pr\left(\frac{A_{\log(b_{c}(x)/x^2)}}{\log(b_{c}(x)/x^2)}\left(c+\frac{2\log(1/x)}{f(x)}\right)\geq c\right),
\end{split}
\end{equation}
where the last equality follows from the definition of $b_{c}.$ 

Now, assume that $\frac{\log(t)}{h(t)}\to 0,$ as $t\to\infty,$ or equivalently that $\frac{\log(1/x)}{f(x)}\to 0,$ as $x\to 0+.$ It follows that 
\begin{equation}
\begin{split}
\pr(V> c)&=\lim_{x\to 0+}\pr\left(\frac{A_{\log(b_{c}(x)/x^2)}}{\log(b_{c}(x)/x^2)}\geq 1\right)\\
&=\lim_{z\to \infty}\pr\left(\frac{A_{z}}{z}\geq 1\right).
\end{split}
\end{equation}Observe that this equality holds for any $c>0$ point of continuity of $V.$ Making $c$ first tend to infinity and then to $0+,$ respectively, and using that $V$ is a real valued r. v. it follows that $$\pr(V=\infty)=0=\lim_{z\to \infty}\pr\left(\frac{A_{z}}{z}\geq 1\right)=\pr(V>0).$$ Which implies that $V=0$ a.s. which in turn is a contradiction to the fact that $V$ is a non-degenerated random variable. 


In the case where $\frac{\log(t)}{h(t)}\to \infty,$ as $t\to\infty,$ or equivalently $\frac{\log(1/x)}{f(x)}\to \infty,$ as $x\to 0+,$ we will obtain a similar contradiction. Indeed, let $l_{c}:\re^+\to\re^+$ be the function $l_{c}(x)=\log(b_{c}(x)/x^2),$ for $x>0,$ this function is strictly decreasing and so its inverse $l_{c}^{-1}$ exists. Observe that by hypothesis $\log(b_{c}(x)/x^2)/f(x)=c+\frac{2\log(1/x)}{f(x)}\to\infty$ as $x\to 0,$ thus $z/f\left(l^{-1}_{c}(z)\right)\to\infty$ as $z\to\infty.$ So, for any $\epsilon>0,$ it holds that $f\left(l^{-1}_{c}(z)\right)/z<\epsilon,$ for every $z$ large enough. It follows from the first equality in equation (\ref{eq:limit2}) that 
\begin{equation}
\begin{split}
\pr(V\geq c)&=\lim_{z\to\infty}\pr\left(\frac{A_{z}}{z}\frac{z}{f(l^{-1}_{c}(z))}\geq c\right)\\
&\geq\lim_{z\to\infty}\pr\left(\frac{A_{z}}{z}\geq c\epsilon\right),
\end{split}
\end{equation}
for any $c$ point of continuity of the distribution of $V.$ So, by replacing $c$ by $c/\epsilon,$ making $\epsilon$ tend to $0+,$ and using that $V$ is finite a.s. it follows that $$\frac{A_{z}}{z}\xrightarrow[z\to\infty]{\text{Law}}0.$$ By the Dynkin Lamperti Theorem it follows that the Laplace exponent $\phi$ of the underlying subordinator $\xi,$ is regularly varying at $0$ with index $1.$ Which is a contradiction to our assumption that  the Laplace exponent of $\xi$ is not regularly varying at $0$ with a strictly positive index.  

\end{section}
\begin{section}{Proof of Proposition \ref{DKprop}}\label{proofDK}
We will start by proving that (i) is equivalent to
\begin{itemize}
\item[(i')]For any $r>0,$ $\displaystyle \frac{\log\left(\int^{r/\phi\left(1/t\right)}_{0}\exp\{\alpha\xi_{s}\}\mathrm{d}s\right)}{\alpha t}\xrightarrow[t\to\infty]{\text{Law}} \widetilde{\xi}_{r},$ with $\widetilde{\xi}$ a stable subordinator with self-similarity parameter $\beta$ whenever $\beta\in]0,1[,$ and in the case where $\beta=0,$ respectively $\beta=1,$ we have that $\widetilde{\xi}_{r}=\infty1_{\{e(1)<r\}},$ respectively $\widetilde{\xi}_{r}=r$ a.s. where $e(1)$ denotes an exponential r.v. with parameter $1.$ 
\end{itemize}
Indeed, using the time reversal property for L\'evy processes we obtain the equality in law
\begin{equation*}
\begin{split}
\int^{r/\phi\left(1/t\right)}_{0}\exp\{\alpha\xi_{s}\}\mathrm{d}s &=\exp\{\alpha\xi_{r/\phi(1/t)}\}\int^{r/\phi\left(1/t\right)}_{0}\exp\{-\alpha(\xi_{r/\phi(1/t)}-\xi_{s})\}\mathrm{d}s\\ 
&\stackrel{\text{Law}}{=}\exp\{\alpha\xi_{r/\phi(1/t)}\}\int^{r/\phi\left(1/t\right)}_{0}\exp\{-\alpha\xi_{s}\}\mathrm{d}s.
\end{split}
\end{equation*}
Given that the r. v. $\int^\infty_{0}\exp{-\alpha\xi_{s}}\mathrm{d}s$ is finite $\pr$-a.s., see e.g. \cite{bysurvey}, we deduce that 
$$\int^{r/\phi\left(1/t\right)}_{0}\exp\{-\alpha\xi_{s}\}\mathrm{d}s\xrightarrow[t\to\infty]{} \int^{\infty}_{0}\exp\{-\alpha\xi_{s}\}\mathrm{d}s<\infty,\qquad \pr-\text{a.s.}$$
These two facts, allow us to conclude that as $t \to \infty,$ the r.v. $\log\left(\int^{r/\phi\left(1/t\right)}_{0}\exp\{\alpha\xi_{s}\}\mathrm{d}s\right)/\alpha t$ converge in law if and only if $\xi_{r/\phi(1/t)}/t$ does. It is well known that the latter convergence holds if and only if $\phi$ is regularly varying at $0$ with an index $\beta\in[0,1].$ In this case both sequences of r.v. converge weakly towards $\widetilde{\xi}_{r}.$ 

Let $\varphi$ be the inverse of $\phi.$ Assume that (i), and so (i'), hold. To prove that (ii) holds we will use the following equalities valid for $\beta\in]0,1]$, for any $x>0$
\begin{equation}\label{eq:ML*}
\begin{split}
\pr\left(\left(\alpha\widetilde{\xi}_{1}\right)^{-\beta}<x\right)&=\pr\left(\alpha\widetilde{\xi}_{1}>x^{-1/\beta}\right)\\
&=\pr\left(\alpha\widetilde{\xi}_{x}>1\right)\\
&=\lim_{t\to\infty}\pr\left(\log\left(\int^{x/\phi(1/t)}_{0}\exp\{\alpha\xi_{s}\}\mathrm{d}s\right)>t\right)\\
&=\lim_{l\to\infty}\pr\left(\int^l_{0}\exp\left\{\alpha\xi_{s}\right\}\mathrm{d}s>\exp\{1/\varphi(x/l)\}\right)\\
&=\lim_{u\to\infty}\pr\left(x\left(\phi\left(\frac{1}{\log(u)}\right)\right)^{-1}>\tau\left(u\right)\right)\\
&=\lim_{u\to\infty}\p_{1}\left(x>\phi\left(\frac{1}{\log(u)}\right)\int^u_{0}X^{-\alpha}_{s}\mathrm{d}s\right),
\end{split}
\end{equation}
where the second equality is a consequence of the fact that $\widetilde{\xi}$ is self-similar with index $1/\beta$. So, using the well known fact that $\widetilde{\xi}_{1}^{-\beta}$ follows a Mittag-Leffler law of parameter $\beta,$ it follows therefrom that (i') implies (ii). Now, to prove that if (ii) holds then (i') does, simply use the previous equalities read from right to left. So, it remains to prove the equivalence between (i) and (ii) in the case $\beta=0.$ In this case we replace the first two equalities in equation (\ref{eq:ML*}) by $$\pr(e(1)<x)=\pr(\alpha\widetilde{\xi}_{x}>1),$$ and simply repeat the arguments above.

Given that the Mittag-Leffler distribution is completely determined by its entire moments the fact that (iii) implies (ii) is a simple consequence of the method of moments. Now we will prove that (i) implies (iii). Let $n\in\nr.$  To prove the convergence of the $n$-th moment of $\phi\left(\frac{1}{\log(t)}\right)\int^t_{0}X^{-\alpha}_{s}\mathrm{d}s$  to that of a multiple of a Mittag-Leffler r.v. we will use the following identity, for $x,c>0,$
\begin{equation}\label{eq:*}
\begin{split}
&\e_{x}\left(\left(c\int^t_{0}X^{-\alpha}_{s}\mathrm{d}s\right)^n\right)=\er\left(\left(c\tau(tx^{-\alpha})\right)^{n}\right)\\
&=c^n\int^\infty_{0}ny^{n-1}\pr(\tau(tx^{-\alpha})>y)\mathrm{d}y\\
&=\int^\infty_{0}ny^{n-1}\pr(\tau(tx^{-\alpha})>y/c)\mathrm{d}y\\
&=\int^\infty_{0}ny^{n-1}\pr\left(\log(tx^{-\alpha})>\alpha\xi_{y/c}+\log\int^{y/c}_{0}\exp\{-\alpha\xi_{s}\}\mathrm{d}s\right)\mathrm{d}y,
\end{split}
\end{equation}
where in the last equality we have used the time reversal property for L\'evy processes. By hypothesis, we know that for $y>0,$ $(\log(t))^{-1}\xi_{y/\phi\left(\frac{1}{\log(t)}\right)}\xrightarrow[t\to\infty]{\text{Law}}\widetilde{\xi}_{y},$ and therefore
\begin{equation*}
\begin{split}
&\pr\left(\log(tx^{-\alpha})>\alpha\xi_{y/\phi\left(\frac{1}{\log(t)}\right)}+\log\int^{y/\phi\left(\frac{1}{\log(t)}\right)}_{0}\exp\{-\alpha\xi_{s}\}\mathrm{d}s\right)\\&\sim\pr(1>\alpha\widetilde{\xi}_{y})\quad \text{as}\ t\to\infty.
\end{split}
\end{equation*}The claimed convergence will be then deduced from  the identity in (\ref{eq:*})  and the dominated convergence theorem. To justify the use of the dominated convergence theorem observe that for any $t,y>0$ such that $y>\phi(1/\log(t))$ we have  $$\log\int^{\frac{y}{\phi(1/\log(t))}}_{0}e^{-\alpha \xi_{s}}\mathrm{d}s\geq \log\int^{1}_{0}e^{-\alpha \xi_{s}}\mathrm{d}s\geq -\alpha\xi_{1},$$ and as a consequence $$\left\{\log(tx^{-\alpha})\geq \alpha\xi_{y/\phi(1/\log(t))}+\log\int^{\frac{y}{\phi(1/\log(t))}}_{0}e^{-\alpha \xi_{s}}\mathrm{d}s\right\}\subseteq \left\{\log(tx^{-\alpha})\geq \alpha\left(\xi_{y/\phi(1/\log(t))}-\xi_{1}\right)\right\}.$$ Using this, the fact that $\xi_{y/\phi(1/\log(t))}-\xi_{1}$ has the same law as $\xi_{\frac{y}{\phi(1/\log(t))}-1}$ and Markov's inequality it follows that the right most term in equation (\ref{eq:*}) is bounded by above by   
\begin{equation*}
\begin{split}
&\left(\phi(1/\log(t))\right)^n+\int^\infty_{\phi(1/\log(t))}ny^{n-1}\pr\left(\log(tx^{-\alpha})\geq \alpha\xi_{\frac{y}{\phi(1/\log(t))}-1}\right)\mathrm{d}y\\ 
&\leq \left(\phi(1/\log(t))\right)^n+\int^\infty_{\phi(1/\log(t))}ny^{n-1}\exp\left\{-\frac{\left(y-\phi(1/\log(t))\right)\phi\left(\alpha/\log\left(tx^{-\alpha}\right)\right)}{\phi(1/\log(t))}\right\}\mathrm{d}y\\
&\leq \left(\phi(1/\log(t))\right)^n+n2^{n-1}\frac{\left(\phi(1/\log(t))\right)^{n}}{\phi\left(\alpha/\log\left(tx^{-\alpha}\right)\right)}+2^{n-1}\Gamma(n+1)\left(\frac{\phi(1/\log(t))}{\phi(\alpha/\log(tx^{-\alpha}))}\right)^n.
\end{split}
\end{equation*}
The regular variation of $\phi$ implies  that the most right hand term in this equation is uniformly bounded for large $t.$

Therefore, we conclude that
\begin{equation*}
\begin{split}
\e_{x}\left(\left(\phi\left(\frac{1}{\log(t)}\right)\int^t_{0}X^{-\alpha}_{s}\mathrm{d}s\right)^n\right)&\xrightarrow[t\to\infty]{}\int^\infty_{0}ny^{n-1}\pr\left(1>\alpha\widetilde{\xi}_{y}\right)\mathrm{d}y\\
&=\begin{cases}\int^\infty_{0}ny^{n-1}\pr\left(e(1)>y\right)\mathrm{d}y,& \\
\int^\infty_{0}ny^{n-1}\pr\left(1>\alpha y^{1/\beta}\widetilde{\xi}_{1}\right)\mathrm{d}y,& \end{cases}\\
&=\begin{cases}n!,& \text{if}\ \beta=0,\\
\er\left(\left(\alpha^{-\beta}\widetilde{\xi}^{-\beta}_{1}\right)^n\right),& \text{if}\ \beta\in]0,1], \end{cases}
\end{split}
\end{equation*}
for any $x>0.$ We have so proved that (i) implies (iii) and thus finished the proof of Proposition \ref{DKprop}.

\begin{proof}[Proof of Corollary \ref{corDK}]
Observe that by Fubinni's theorem $\e_{1}\left(\int^t_{0}X^{-\alpha}_{s}\mathrm{d}s\right)=\int^t_{0}\e_{1}\left(X^{-\alpha}_{s}\right)\mathrm{d}s,$ and that the function $t\mapsto\e_{1}(X^{-\alpha}_{t})$ is non-increasing. So, by (iii) in Proposition \ref{DKprop} it follows that $$\int^t_{0}\e_{1}\left(X^{-\alpha}_{s}\right)\mathrm{d}s\sim \frac{1}{\alpha^{\beta}\Gamma(1+\beta)\phi\left(\frac{1}{\log(t)}\right)},\qquad t\to\infty.$$

Then, the monotone density theorem for regularly varying functions (Theorem 1.7.2 in \cite{BGT}) implies that $$\e_{1}\left(X^{-\alpha}_{t}\right)=o\left( \frac{1}{\alpha^{\beta}\Gamma(1+\beta)t\phi\left(\frac{1}{\log(t)}\right)}\right),\qquad t\to\infty.$$ Besides, given that $\e_{1}\left(X^{-\alpha}_{t}\right)=\er(e^{-tR_{\phi}}),$ for every $t\geq 0,$ we can apply Karamata's Tauberian Theorem (Theorem 1.7.1' in \cite{BGT}) to obtain the estimate $$\pr(R_{\phi}<s)=o\left(\frac{s}{\alpha^{\beta}\Gamma(1+\beta)\phi\left(\frac{1}{\log(1/s)}\right)}\right),\qquad s\to 0+.$$  Besides, applying Fubinni's theorem and making a change of variables of the form $u=sR_{\phi}/t$ we obtain the identity 
\begin{equation*}
\begin{split}
\int^t_{0}\e_{1}(X_{s}^{-\alpha})\mathrm{d}s=&\int^t_{0}\er(e^{-sR_{\phi}})\mathrm{d}s\\&=\er\left(\frac{t}{R_{\phi}}\int^{R_{\phi}}_{0}e^{-tu}\mathrm{d}u\right)\\
&=t\int^\infty_{0}\mathrm{d}ue^{-tu}\er\left(1_{\{R_{\phi}>u\}}\frac{1}{R_{\phi}}\right),\qquad t>0.
\end{split}
\end{equation*} So using Proposition \ref{DKprop} and Karamata's Tauberian Theorem we deduce that $$\er\left(1_{\{R_{\phi}>s\}}\frac{1}{R_{\phi}}\right)\sim\frac{1}{\alpha^\beta\Gamma(1+\beta)\phi(1/\log(1/s))},\qquad s\to 0+.$$

The proof of the second assertion follows from the fact that $I_{\phi}$ has the same law as $\alpha^{-1}R_{\theta}$ where $\theta(\lambda)=\lambda/\phi(\lambda),$ $\lambda>0,$ for a proof of this fact see the final Remark in \cite{BYfactorizations}.   
\end{proof}
\end{section}

\begin{section}{Proof of Theorem~\ref{th:2}}\label{proofth:2}
The proof of the first assertion in Theorem~\ref{th:2} uses a well known law of iterated logarithm for subordinators, see e.g. Chapter III in~\cite{bertoinbook}. Whilst, the second assertion in Theorem~\ref{th:2} is reminiscent of, and its proof is based on, a result for subordinators that appears in \cite{bertoin1995}. But to use those results we need three auxiliary Lemmas. The first of them is rather elementary.
\begin{lemma}\label{lemma:01}For every $c>0,$ and for every $f:\re^+\to\re^+,$ the a.s. equality of sets holds:
$$\mathcal{A}_1:=\{\xi_{\tau(s)}\leq c\log(s), \quad i.o.\ s\to\infty\}=\{\xi_{s}\leq c\log(C_s), \quad i.o.\ s\to\infty\}:=\mathcal{A}_2,$$
$$\mathcal{B}_1:=\{\xi_{\tau(s)}\geq f\left(\log(s)\right), \quad i.o.\ s\to\infty\}=\{\xi_{s}\geq f\left(\log(C_s)\right), \quad i.o.\ s\to\infty\}:=\mathcal{B}_2.$$
\end{lemma}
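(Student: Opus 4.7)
The plan is to prove both equalities as pointwise identities of sets on the full--probability event where the clock $C$ is a homeomorphism of $[0,\infty)$. Since the conclusion is purely a change--of--variables statement, no probabilistic input beyond the path regularity of $\xi$ is needed.

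First I would verify the basic regularity of $C$ and $\tau$. As $\xi$ is a subordinator, $\xi_s\geq 0$ for every $s\geq 0$, so the integrand $e^{\alpha\xi_s}$ in the definition (\ref{eq:defA}) is bounded below by $1$ and, by c\`adl\`ag--ness of $\xi$, is bounded on every compact interval. Hence $t\mapsto C_t$ is continuous, strictly increasing, satisfies $C_0=0$, and $C_t\to\infty$ as $t\to\infty$. Consequently $\tau=C^{-1}$ is itself a continuous, strictly increasing bijection of $[0,\infty)$ onto itself, and one has the pointwise identities $\tau(C_t)=t$ and $C_{\tau(s)}=s$ for all $t,s\geq 0$.

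Second, I would carry out the substitution $t=\tau(s)$, equivalently $s=C_t$. Under this bijection, $\xi_{\tau(s)}=\xi_t$ and $\log(s)=\log(C_t)$, so the inequality $\xi_{\tau(s)}\leq c\log(s)$ is literally the same relation as $\xi_t\leq c\log(C_t)$. Because $\tau$ and $C$ are strictly increasing homeomorphisms of $[0,\infty)$, a sequence $s_n$ tends to infinity if and only if $t_n:=\tau(s_n)$ tends to infinity; likewise in the opposite direction with $s_n=C_{t_n}$. Therefore the set of $\omega$'s for which there exist $s_n\uparrow\infty$ with $\xi_{\tau(s_n)}\leq c\log(s_n)$ coincides exactly with the set of $\omega$'s for which there exist $t_n\uparrow\infty$ with $\xi_{t_n}\leq c\log(C_{t_n})$, proving $\mathcal{A}_1=\mathcal{A}_2$. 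Exactly the same substitution, with the inequality $\leq c\log(\cdot)$ replaced by $\geq f(\log(\cdot))$, gives $\mathcal{B}_1=\mathcal{B}_2$.

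There is no real obstacle here: this is a deterministic change of variable, valid for every realization of $\xi$. The only point worth double--checking is that $C$ is genuinely a continuous bijection (so that the substitution $t\leftrightarrow s$ is well defined and preserves the ``infinitely often'' structure), which is ensured by the positivity and local boundedness of $e^{\alpha\xi}$. The usefulness of the lemma lies precisely in trading the time--changed variable $\xi_{\tau(s)}$, indexed by the natural time of $X$, for $\xi$ at its own natural time, at the price of replacing $\log(s)$ by $\log(C_s)$; the subsequent steps of the proof of Theorem~\ref{th:2} will then be able to invoke directly the known pathwise results for subordinators recalled in \cite{bertoinbook} and \cite{bertoin1995}.
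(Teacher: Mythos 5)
Your proposal is correct and follows essentially the same route as the paper: both arguments rest on the observation that for every realization, $C$ is a continuous strictly increasing bijection of $[0,\infty)$ onto itself (since $e^{\alpha\xi_s}\geq 1$ and $\xi$ is c\`adl\`ag), with $\tau=C^{-1}$, so that the substitution $s\leftrightarrow C_t$ turns $\xi_{\tau(s)}\leq c\log(s)$ into $\xi_t\leq c\log(C_t)$ and carries unbounded sets of times to unbounded sets of times. The paper phrases this as a sequences-and-subsequences argument, while you state the underlying change of variables directly and note that the identity is in fact pathwise for every $\omega$ (not merely almost sure), which is a slightly cleaner formulation of the same idea.
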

\begin{proof} We just prove the first equality, the second is proved
using a similar argument. Let $\omega\in\mathcal{A}_1$ and $(s_n,
n\geq 1)$ be any increasing sequence of reals, we claim that there
exists a subsequence $(s_{n_k}, k\geq 1)$ such that
$\xi_{s_{n_k}}(\omega)\leq c\log(C_{s_{n_k}}(\omega))$ for all $k$ large
enough. In that case we will have that $\omega\in\mathcal{A}_2$
owing to $(s_n, n\geq 1)$ is an arbitrary sequence. Indeed, let
$r_n:=C_{s_n}(\omega),$ $n\geq 1$ given that
$\omega\in\mathcal{A}_1$ we know that there exists a subsequence
$(r_{n_k}, k\geq 1)$ such that
$$\xi_{\tau(r_{n_k})}(\omega)\leq c\log(r_{n_k}),$$
for every $k$ large enough. It follows that the subsequence
$s_{n_k}=\tau(r_{n_k})(\omega)$ does the required work. Now the
inclusion $\mathcal{A}_2\subseteq\mathcal{A}_1$ is proved via the
same argument.
\end{proof}
\begin{lemma}\label{lemma:02}Under the assumptions of Theorem~\ref{th:2} we have the following estimates of the functional $\log\left(C_{t}\right)$ as $t \to \infty,$
\begin{equation}\label{eq:liminf}\liminf_{t\to\infty}\frac{\log\left(C_t\right)}{g(t)}=\alpha\beta(1-\beta)^{(1-\beta)/\beta}:=\alpha c_{\beta},\qquad
\pr\text{-a.s.}
\end{equation}
and
\begin{equation}\label{eq:limsup}\limsup_{t\to\infty}\frac{\log\left(C_t\right)}{\xi_{t}}=\alpha,\qquad
a.s.\end{equation}
\end{lemma}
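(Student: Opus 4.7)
The plan is to derive both estimates from the elementary pathwise sandwich
\begin{equation*}
\alpha\,\xi_{t-1}\;\leq\;\log C_t\;\leq\;\log t+\alpha\,\xi_t,\qquad t\geq 1,
\end{equation*}
whose upper bound is the trivial $C_t\leq t\,e^{\alpha\xi_t}$ (by monotonicity of $\xi$) and whose lower bound follows from $C_t\geq\int_{t-1}^{t}e^{\alpha\xi_s}\,\mathrm{d}s\geq e^{\alpha\xi_{t-1}}$. For (\ref{eq:liminf}), divide by $g(t)$. A short computation using the regular variation at $0+$ of $\varphi$ with index $1/\beta$ shows that $g$ itself is regularly varying at $\infty$ with positive index $1/\beta>1$; in particular $\log(t)/g(t)\to 0$ and $g(t-1)/g(t)\to 1$ as $t\to\infty$. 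Fristedt's law of the iterated logarithm for subordinators in the regularly varying at $0$ regime (see \cite{bertoinbook}, Chapter III) asserts that under hypothesis~(i), $\liminf_{t\to\infty}\xi_t/g(t)=c_\beta$ almost surely. Applied to the upper half of the sandwich this gives $\liminf_t\log C_t/g(t)\leq \alpha c_\beta$, while the lower half gives $\liminf_t\log C_t/g(t)\geq\alpha\liminf_t\xi_{t-1}/g(t)=\alpha c_\beta$, the last equality using $g(t-1)/g(t)\to 1$ and the substitution $s=t-1$. This yields (\ref{eq:liminf}).

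For (\ref{eq:limsup}), divide the sandwich by $\xi_t$. Since $g(t)/\log(t)\to\infty$, Fristedt's LIL gives $\xi_t/\log(t)\to\infty$ a.s., so the upper half produces $\limsup_t\log C_t/\xi_t\leq\alpha$. The matching lower bound reduces to showing $\limsup_{t\to\infty}\xi_{t-1}/\xi_t=1$ a.s. Along integer times, the increments $Y_n:=\xi_n-\xi_{n-1}$ are i.i.d.\ with the law of $\xi_1$ and independent of $\xi_{n-1}$; since $\xi_{n-1}\to\infty$ a.s., dominated convergence yields $\pr(Y_n\leq\delta\,\xi_{n-1})=\er[F(\delta\,\xi_{n-1})]\to 1$ for every $\delta>0$, where $F$ is the distribution function of $\xi_1$. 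The reverse Fatou lemma for sets then gives $\pr(Y_n\leq\delta\,\xi_{n-1}\ \text{i.o.})=1$, that is, $\limsup_n\xi_{n-1}/\xi_n\geq (1+\delta)^{-1}$; letting $\delta\downarrow 0$ and passing from integers to real $t$ by monotonicity of $\xi$ completes the proof.

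The only non-elementary ingredient is the Fristedt-type LIL with the precise normaliser $g$ and the constant $c_\beta=\beta(1-\beta)^{(1-\beta)/\beta}$. The main obstacle I anticipate is pinning down the correct statement of this LIL for large times of subordinators whose Laplace exponent is regularly varying at $0$, rather than the more familiar version for small times under regular variation at $\infty$; everything else reduces to the two-line sandwich together with the Borel--Cantelli observation $\limsup\xi_{t-1}/\xi_t=1$.
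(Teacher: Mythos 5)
Your proof is correct, and it reaches both estimates by a route that differs from the paper's in the two lower bounds. The upper bounds coincide: both arguments start from $\log C_t\leq\log t+\alpha\xi_t$ together with the large-time law of the iterated logarithm $\liminf_{t\to\infty}\xi_t/g(t)=c_\beta$ a.s. (for the limsup the paper uses the strong law $\xi_t/t\to\er(\xi_1)=\infty$ where you use $g(t)/\log t\to\infty$; both are fine). For the lower bound in (\ref{eq:liminf}) the paper inserts the eventual inequality $\xi_s\geq(1-\epsilon)c_\beta g(s)$ into the integral defining $C_t$ and invokes Theorem 4.2.10 of \cite{BGT} to identify the logarithmic asymptotics of $\int\exp\{(1-\epsilon)\alpha c_\beta g(s)\}\mathrm{d}s$; your one-line bound $C_t\geq e^{\alpha\xi_{t-1}}$, combined with $g(t-1)/g(t)\to1$ (uniform convergence theorem for regularly varying functions), reaches the same conclusion without any Abelian theorem, which is a genuine simplification. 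For the lower bound in (\ref{eq:limsup}) the paper reuses the already-proved (\ref{eq:liminf}) through the elementary inequality $\liminf(a_tb_t)\leq(\liminf a_t)(\limsup b_t)$ applied to $a_t=\xi_t/g(t)$ and $b_t=\log C_t/\xi_t$; you instead prove $\limsup_t\xi_{t-1}/\xi_t=1$ a.s. via the reverse Fatou lemma applied to the events $\{\xi_n-\xi_{n-1}\leq\delta\xi_{n-1}\}$, whose probabilities tend to $1$ by independence and stationarity of the increments. Both are valid; the paper's deduction is shorter once (\ref{eq:liminf}) is in hand, while yours is self-contained and makes transparent that the limsup is attained along times where the subordinator has small relative growth. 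The one point worth spelling out explicitly in your write-up is the verification that $g$ is regularly varying at infinity with index $1/\beta$ (whence $\log t/g(t)\to0$ and $g(t-1)/g(t)\to1$), which follows from the regular variation of $\varphi$ at $0$ with index $1/\beta$ and the slow variation of $\log\log t$.
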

\begin{proof}
Observe that
$$\log\left(C_t\right)\leq \log(t)+\alpha\xi_t,\qquad \forall t\geq 0,$$
so
$$\liminf_{t\to\infty}\frac{\log\left(C_t\right)}{g(t)}\leq \liminf_{t\to\infty}\left(\frac{\log(t)}{g(t)}+\frac{\alpha\xi_t}{g(t)}\right):=\alpha c_{\beta},\qquad \pr-\text{a.s.}$$
because $g$ is a function that is regularly varying at infinity with
an index $0<1/\beta.$ For every
$\omega\in\mathcal{B}:=\{\liminf_{t\to\infty}\frac{\xi_t}{g(t)}=c_{\beta}\}$
 and every $\epsilon>0$ there exists a $t(\epsilon,\omega)$ such
 that $$\xi_s(\omega)\geq (1-\epsilon)c_{\beta}g(s),\qquad s\geq
 t(\epsilon,\omega).$$ Therefore, $$\int^{t}_0\exp\{\alpha\xi_s\}\mathrm{d}s\geq\int^{t}_{t(\epsilon,\omega)}\exp\{(1-\epsilon)\alpha c_{\beta}g(s)\}\mathrm{d}s,\qquad \forall t\geq t(\epsilon,\omega),$$
and by Theorem 4.2.10 in \cite{BGT} we can ensure that
$$\lim_{t\to\infty}\frac{\log\left(\int^{t}_{t(\epsilon,\omega)}\exp\{(1-\epsilon)\alpha c_{\beta}g(s)\}\mathrm{d}s\right)}{(1-\epsilon)\alpha c_{\beta}g(t)}=1.$$
This implies that for every $\omega\in\mathcal{B}$ and $\epsilon>0$
$$\liminf_{t\to\infty}\frac{\log\left(C_t(\omega)\right)}{g(t)}\geq
(1-\epsilon)\alpha c_{\beta}.$$ Thus, by making $\epsilon\to 0+$ we obtain that for
every $\omega \in \mathcal{B}$
$$\liminf_{t\to\infty}\frac{\log\left(C_t(\omega)\right)}{g(t)}=\alpha c_{\beta},$$
which finish the proof of the first claim because $\pr\left(\mathcal{B}\right)=1.$

We will now proof the second claim. Indeed, as before we have that
$$\limsup\frac{\log\left(C_t\right)}{\xi_{t}}\leq \limsup_{t\to\infty}\frac{\log(t)+\alpha\xi_{t}}{\xi_{t}}=1,\qquad \text{a.s.},$$ owing to the fact
$$\lim_{t\to\infty}\frac{\xi_{t}}{t}=\er(\xi_1)=\infty,\qquad a.s.$$ Besides, it is
easy to verify that for every $\omega\in B$
$$\alpha c_{\beta}=\liminf\frac{\log(C_t)(\omega)}{g(t)}\leq\left[\liminf_{t\to\infty}\frac{\xi_{t}(\omega)}{g(t)}\right]\left[\limsup_{t\to\infty}\frac{\log\left(C_t(\omega)\right)}{\xi_{t}}\right],$$ and therefore that
$$\alpha\leq \limsup_{t\to\infty}\frac{\log\left(C_t\right)}{\xi_{t}},\qquad \text{a.
s.}$$ This finishes the proof of the a.s. estimate in
equation~(\ref{eq:limsup}).
\end{proof}
Using the Lemma~\ref{lemma:01} and the estimate~(\ref{eq:limsup}) the first assertion in Theorem~\ref{th:2} is straightforward. To prove the second assertion in Theorem~\ref{th:2} we will furthermore need the
following technical result.
\begin{lemma}\label{lemma:finiteness}
Under the assumptions of (ii) in Theorem~\ref{th:2} for any increasing function $f$ with positive increase we have that
\begin{equation}\label{eq:finiteness}
\int^\infty\phi\left(1/f(g(t))\right)\mathrm{d}t<\infty \Longleftrightarrow
\int^\infty\phi\left(1/f(cg(t))\right)\mathrm{d}t<\infty,\quad c>0.
\end{equation}
\end{lemma}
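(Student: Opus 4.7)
The plan is to prove that, for each fixed $c>0$, the two integrands $\phi(1/f(g(t)))$ and $\phi(1/f(cg(t)))$ are comparable up to positive multiplicative constants for all sufficiently large $t$; the equivalence of the two integrability statements is then immediate by comparison.

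First, I would extract from the positive-increase hypothesis on $f$ a doubling bound: since $f$ is increasing and $\liminf_{t\to\infty}f(t)/f(2t)>0$, there exists $K<\infty$ with $f(2u)\leq Kf(u)$ for $u$ large. Iterating gives $f(\lambda u)\leq K^{\lceil\log_{2}\lambda\rceil}f(u)$ for every $\lambda\geq 1$, and combining this with the monotonicity of $f$ yields, for every $c>0$, positive constants $A(c),B(c)$ such that $A(c)f(u)\leq f(cu)\leq B(c)f(u)$ for all $u$ large. Now, since $\phi$ is regularly varying at $0$ with index $\beta\in]0,1[$, its right-continuous inverse $\varphi$ is regularly varying at $0$ with index $1/\beta$, and a direct computation shows that $g$ is regularly varying at infinity with index $1/\beta$; in particular $g(t)\to\infty$. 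Applying the previous estimate with $u=g(t)$ gives
\begin{equation*}
A(c)f(g(t))\leq f(cg(t))\leq B(c)f(g(t)),\qquad t\ \text{large}.
\end{equation*}

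Second, if $f$ happens to be bounded then both integrands remain bounded below by a positive constant, so both integrals are infinite and the equivalence is trivial. In the remaining case $f(u)\to\infty$, the display above implies that $1/f(g(t))\to 0$ and that the ratio $\bigl(1/f(cg(t))\bigr)\big/\bigl(1/f(g(t))\bigr)$ is confined, for $t$ large, to the fixed compact interval $[B(c)^{-1},A(c)^{-1}]\subset]0,\infty[$. Invoking the uniform convergence theorem for regularly varying functions (Theorem~1.5.2 in \cite{BGT}) applied to $\phi$ on this compact interval as the argument tends to $0$, I obtain positive constants $c_{1}(c),c_{2}(c)$ with
\begin{equation*}
c_{1}(c)\,\phi(1/f(g(t)))\leq \phi(1/f(cg(t)))\leq c_{2}(c)\,\phi(1/f(g(t))),\qquad t\ \text{large},
\end{equation*}
and integrating on a neighbourhood of infinity yields the claimed equivalence.

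The only mildly delicate point is the appeal to uniform regular variation: one has to know that the ratio of the two arguments fed to $\phi$ stays in a fixed compact subinterval of $]0,\infty[$, which is precisely what the doubling bound coming from the positive-increase hypothesis provides. Once that is in place, the rest is routine comparison; no recourse to the Dynkin--Lamperti material or to the structure of $X$ is needed, the statement being purely a tauberian-type remark about $\phi\circ(1/f\circ g)$.
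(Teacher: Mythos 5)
Your proof is correct and rests on the same two ingredients as the paper's: positive increase of $f$ and regular variation of $\phi$ at $0$. The route is a minor streamlining rather than a different one. The paper first changes variables $t\mapsto c^{\beta}t$ and then invokes regular variation of $g$ (with index $1/\beta$) to replace $g(c^{\beta}t)$ by $2cg(t)$, before applying positive increase of $f$ and a two-sided RV estimate on $\phi$; you instead iterate the doubling bound from positive increase to get $A(c)f(u)\leq f(cu)\leq B(c)f(u)$ directly, which compares $f(cg(t))$ with $f(g(t))$ pointwise and only requires $g(t)\to\infty$, not the regular variation of $g$. Your appeal to the uniform convergence theorem for $\phi$ plays exactly the role of the paper's pointwise RV estimate $\tfrac12\leq\phi(M/s)/(M^{\beta}\phi(1/s))\leq 2$. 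One small genuine gain on your side: you dispose of the case where $f$ is bounded explicitly (both integrals trivially diverge), whereas the paper's estimate $\phi(M/s)\asymp M^{\beta}\phi(1/s)$ is stated only for $s$ large and so tacitly assumes $f\circ g\to\infty$; your case split closes that small loophole.
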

\begin{proof}
Our argument is based on the fact that $\phi$ and $g$ are functions
of regular variation at $0,$ and $\infty,$ respectively, with index $\beta$ and
$1/\beta,$ respectively, and on the fact that $f$ has positive
increase. Under the latter assumption we can assume that there is a
constant constant $M>0$ such that
$M<\liminf_{s\to\infty}\frac{f(s)}{f(2s)}.$ Thus for all $t, s$
large enough we have the following estimates for $g$ and $\phi.$
$$\frac{1}{2}\leq\frac{g(tc^\beta)}{cg(t)}\leq 2,\qquad
\frac{1}{2}\leq\frac{\phi\left(M/s\right)}{M^{\beta}\phi\left(1/s\right)}\leq
2.$$  Assume that the integral in the left side of the equation
(\ref{eq:finiteness}) is finite. It implies that the integral
$\int^\infty\phi\left(1/f(g(c^{\beta}t))\right)\mathrm{d}t<\infty,$
and  so that
\begin{equation*}
\begin{split}
\infty&>\int^\infty\phi\left(1/f(g(c^{\beta}t))\right)\mathrm{d}t\\
&\geq\int^\infty\phi\left(1/f(2cg(t))\right)\mathrm{d}t\\
&\geq\int^\infty\phi\left(\frac{f(cg(t))}{f(2cg(t))}\frac{1}{f(cg(t))}\right)\mathrm{d}t\\
&\geq\int^\infty\phi\left(M\frac{1}{f(cg(t))}\right)\mathrm{d}t\\
&\geq M^{\beta}\int^\infty\frac{\phi\left(M\frac{1}{f(cg(t))}\right)}{M^{\beta}\phi\left(\frac{1}{f(cg(t))}\right)}\phi\left(\frac{1}{f(cg(t))}\right)\mathrm{d}t\\
&\geq\frac{M^{\beta}}{2}\int^\infty\phi\left(\frac{1}{f(cg(t))}\right)\mathrm{d}t,
\end{split}
\end{equation*}
 where to get the second inequality we used that $f$ and $\phi$ are increasing
and the estimate of $g,$ in the fourth we used the fact that $f$ has
positive increase and in the sixth inequality we used the estimate
of $\phi.$ To prove that if the integral on the left side of
equation~(\ref{eq:finiteness}) is not finite then that the one in the right
is not finite either, we use that
$\limsup_{s\to\infty}\frac{f(s)}{f(s/2)}<M^{-1},$  and the estimates
provided above for $g$ and $\phi,$ respectively. We omit the
details.
\end{proof}

Now we have all the elements to prove the second claim of Theorem~\ref{th:2}.
\begin{proof}[Proof of Theorem~\ref{th:2}.b]
The proof of this result is based on Lemma~4 in~\cite{bertoin1995}
concerning the rate of growth of subordinators when the Laplace
exponent is regularly varying at $0$. Let $f$ be a function such
that the hypothesis in (b) in Theorem~\ref{th:2} is satisfied and
the integral in (\ref{inttest}) is finite. A consequence of
Lemma~\ref{lemma:finiteness} is that
$$\int^\infty\phi\left(1/f(\alpha c_{\beta}
g(t))\right)\mathrm{d}t<\infty.$$ On the one hand, according to the
Lemma~4~in~\cite{bertoin1995} we have that
\begin{equation*}
\limsup_{t\to\infty}\frac{\xi_t}{f(\alpha c_{\beta}g(t))}=0,\qquad \pr-\text{a.s.}
\end{equation*}Let $\Omega_1$ be the set of paths for which the
latter estimate and the one in~(\ref{eq:liminf}) hold. It is clear
that $\pr\left(\Omega_1\right)=1.$ On the other hand, for every
$\omega\in\Omega_1$ there exists a $t_0(\omega,1/2)$ such that
$$\alpha c_{\beta}g(s)/2\leq \log\left(C_s(\omega)\right),\qquad \forall s\geq
t_{0}(\omega,1/2),$$ with $c_{\beta}$ as in the proof of Lemma~\ref{lemma:02}. Which together with the fact $\limsup_{t\to\infty}\frac{f(t)}{f(t/2)}<\infty$ implies that for $\omega\in\Omega_1,$
$$\limsup_{s\to\infty}\frac{\xi_s(\omega)}{f\left(\log\left(C_s(\omega)\right)\right)}\leq
\limsup_{s\to\infty}\frac{\xi_s}{f(\alpha c_{\beta}g(s))}\frac{f(\alpha c_{\beta}g(s))}{f\left(\alpha c_{\beta}g(s)/2\right)}=0.$$
In this way we have proved that
\begin{equation}\label{eq:lim1}
\limsup_{s\to\infty}\frac{\xi_s}{f\left(\log\left(C_s\right)\right)}=0,\qquad\pr-\text{a.s.}\end{equation}
Now, let $f$ be a function such that the integral in
(\ref{inttest}) is not finite. As before applying the Lemma~\ref{lemma:finiteness} and the integral test
in Lemma~4 in~\cite{bertoin1995} we can ensure that
\begin{equation*}
\limsup_{t\to\infty}\frac{\xi_t}{f(\alpha c_{\beta}g(t))}=\infty,\qquad \pr-\text{a.s.}
\end{equation*} Denote by $\Omega_2$ the set of paths for which the
latter estimate and the one in~(\ref{eq:liminf}) hold. Let $(s_n,
n\in\nr)$ be a sequence of positive real numbers such that
$s_n\to\infty$ as $n\to\infty.$ For $\omega\in\Omega_2$ there exists
a subsequence $(s_{n_k}, k\in\nr)$ such that
\begin{equation*}
\lim_{k\to\infty}\frac{\xi_{s_{n_k}}(\omega)}{f(\alpha c_{\beta}g(s_{n_k}))}=\infty.
\end{equation*}
Furthermore, there exists a subsequence of $(s_{n_k}, k\in\nr),$ say
$(\widetilde{s}_{n_k}, k\in\nr),$ for which
$$\log\left(C_{\widetilde{s}_{n_k}}\right)\leq 2\alpha c_{\beta}g(\widetilde{s}_{n_k}),\qquad \forall k\in\nr.$$
The former and latter assertions imply that for $\omega\in\Omega_2$
\begin{equation*}
\lim_{k\to\infty}\frac{\xi_{\widetilde{s}_{n_k}}(\omega)}{f\left(\log\left(C_{\widetilde{s}_{n_{k}}}(\omega)\right)\right)}\geq
\lim_{k\to\infty}\frac{\xi_{\widetilde{s}_{n_k}}(\omega)}{f(\alpha c_{\beta}g(\widetilde{s}_{n_k}))}\frac{f(\alpha c_{\beta}g(\widetilde{s}_{n_k}))}{f\left(2\alpha c_{\beta}g(\widetilde{s}_{n_{k}})\right)}=\infty.
\end{equation*}
We deduce therefrom that
\begin{equation}\label{eq:lim2}\limsup_{t\to\infty}\frac{\xi_t}{f(\log\left(C_t\right))}=\infty,\qquad \pr\text{-a.s.}\end{equation}
Therefore, using the estimates in (\ref{eq:lim1}) and (\ref{eq:lim2}) together with Lemma \ref{lemma:01} we conclude the proof. 
\end{proof}
\end{section}
\begin{section}{On regularly varying splitting measures}\label{sect:frag}
To the best of our knowledge in the literature about self-similar fragmentation theory there is no example of self-similar fragmentation process whose dislocation measure is such that the hypothesis about the function $\Phi$ in Corollary \ref{cor:frag} is satisfied. In this section we will extend a model studied by Brennan and Durrett \cite{Brennandurrett2,Brennandurrett1} to provide an example of such a fragmentation process. Next, we will provide a necessary condition for a dislocation measure to be such that the hypothesis of Corollary \ref{cor:frag} is satisfied.

\begin{example}\label{brennandurret}
In ~\cite{Brennandurrett2,Brennandurrett1} Brennan \& Durrett studied a model that represents the evolution of a particle system in which a particle of size $x$ waits an exponential time of parameter $x^{\alpha},$ for some $\alpha>0,$ and then undergoes a binary splitting into a left particle of size $Ux$ and a right particle of size $(1-U)x.$ It is assumed that $U$ is a random variable that takes values in $[0,1],$ with a fixed distribution and whose law is independent of the past of the system. Assume that the particle system starts with a sole particle of size $1$ and that we observe the size of the left-most particle and write $l_{t}$ for its length at time $t\geq 0.$ It is known that the process $X:=\{X_{t}=1/l_{t}, t\geq 0\}$ is an increasing self-similar Markov process with self-similarity index $1/\alpha,$ starting at $1,$ see e.g. \cite{Brennandurrett2,Brennandurrett1} or \cite{BC2002}.  It follows from the construction that the subordinator $\xi$ associated to $X$ via Lamperti's transformation is a compound Poisson process with L\'evy measure the distribution of $-\log(U).$ That is, the Laplace exponent of $\xi$ has the form $$\phi(\lambda)=\e\left(1-U^{\lambda}\right),\lambda \geq 0.$$ In this case the Laplace exponent $\phi$ is regularly varying at zero with an index $\beta\in]0,1[$ if and only if $x\mapsto \p(-\log(U)>x)$ is regularly varying at infinity with index $-\beta.$ In particular the mean of $-\log(U)$ is not finite. If so, we have that $$\frac{-\log(l_{t})}{\log(t)}\xrightarrow[t\to\infty]{Law}V+\frac{1}{\alpha},$$ where $V$ is a r.v. whose law is described in Theorem \ref{th:1}. Observe that the limit law depends only on the index of self-similarity  and that one of regular variation of the right tail of $-\log(U)$ and not directly on the path of the underlying L\'evy process. Whilst in the case where $\e\left(-\log(U)\right)<\infty,$ it has been proved in \cite{Brennandurrett2,Brennandurrett1} and \cite{BC2002} that $l_{t}$ decreases as a power function of order $-\alpha$, and the weak limit of $t^{\alpha}l_{t}$ as $t\to\infty$ is $1/Z,$ where $Z$ is the r.v. whose law is described in (\ref{eq:weakconvergence}) and $(\ref{eq:limitlaw}
);$ so the limit law depends on the whole trajectory of the underlying subordinator. Besides, the first part of Theorem \ref{th:2} implies that $$\limsup_{t\to\infty}\frac{\log(l_{t})}{\log(t)}=-1/\alpha,\qquad \text{a.s.}$$ The $\liminf$ can be studied using the second part of Theorem \ref{th:2}. Furthermore, the  results in Corollary \ref{cor:frag} establish the convergence in probability of the empirical measure $\{\rho_{t}, t\geq 0\}$ associated to the fragmentation process that arises in this model.  
\end{example}

It is known, see \cite{bertoin2002} equation (8), that in general the splitting measure, say $\nu,$  of a self-similar fragmentation process is related to the L\'evy measure, say $\Pi,$ of the subordinator associated via Lamperti's transformation to the process of the tagged fragment, through the formula $$\Pi]x,\infty[=\int_{\mathcal{S}^{\downarrow}}\left(\sum_i s_i 1_{\{s_i<\exp(-x)\}}\right) \nu(\mathrm{d}s),\qquad x>0.$$ So the hypothesis of Corollary \ref{cor:frag} is satisfied with an index $\beta\in]0,1[$ whenever $\nu$ is such that 
\begin{itemize}
\item the function $x\mapsto \int_{\mathcal{S}^{\downarrow}}\left(\sum_i s_i 1_{\{s_i<\exp(-x)\}}\right) \nu(\mathrm{d}s),$ $x>0,$ is regularly varying at infinity with an index $-\beta$. 
\end{itemize}
In the particular case where $\nu$ is binary, that is when $\nu\{s\in\mathcal{S}^{\downarrow}: s_{3}>0\}=0,$ the latter condition is equivalent to the condition 
\begin{itemize}
\item
the function $x\mapsto \int_0^{\exp(-x)} y \nu(s_2 \in \mathrm{d}y)=\int^1_{1-\exp(-x)} (1-z) \nu(s_1 \in \mathrm{d}z),$ $x>0,$ is regularly varying at infinity with an index $-\beta,$
\end{itemize} given that in this case $s_1$ is always $\geq 1/2,$ and $\nu\{s_{1}+s_{2}\neq 1\}=0,$ by hypothesis.
\end{section}

\begin{section}{Final comments}\label{FC}
Lamperti's transformation tells us that under $\p_{1}$ the process $(\int^t_{0}X^{-\alpha}_{s}\mathrm{d}s, \log(X(t)), t\geq 0)$ has the same law as $(\tau(t), \xi_{\tau(t)}), t\geq 0)$ under $\pr.$ So, our results can be viewed as a study of how the time change $\tau$ modifies the asymptotic behaviour of the subordinator $\xi.$ Thus, it may be interesting to compare our results with those known for subordinators in the case where the associated Laplace exponent is regularly varying at $0$. 

On the one hand, we used before that the regular variation of the Laplace exponent $\phi$ at $0$ with an index $\beta\in]0,1],$ is equivalent to the convergence in distribution of $\varphi(1/t)\xi_{t}$ as $t\to \infty,$ to a real valued r.v., with $\varphi$ the right-continuous inverse of $\phi.$ On the other hand, Theorem \ref{th:1} tells us that   the former is equivalent to  the convergence in distribution of $\xi_{\tau(t)}/\log(t),$ as $t\to\infty,$ to a real valued random variable. Moreover, under the assumption of regular variation of $\phi$ with an index $\beta\in]0,1],$ we have that $\lim_{t\to\infty}\varphi(1/t)\log(t)=0.$ Thus we can conclude that the effect of $\tau(t)$ on $\xi$ is to slow down its rate of growth, which is rather normal given that $\tau(t)\leq t,$ for all $t\geq 0$, $\pr$-a.s.  Theorem \ref{th:1} tells us the exact rate of growth of $\xi_{\tau},$ in the sense of weak convergence.  Furthermore, these facts suggest that $\varphi(1/\tau(t))$ and $\log(t)$ should have the same order, which is confirmed by Proposition \ref {DKprop}. Indeed, using the regular variation of $\varphi$ and the estimate in (ii) in Proposition \ref {DKprop} we deduce the following estimates in distribution $$\varphi(1/\tau(t))\log(t)\sim \varphi(1/\tau(t))/\varphi(\phi(1/\log(t))) \sim (\alpha^{-\beta}W)^{-1/\beta}, \quad \text{as} \ t\to\infty,$$ where $W$ follows a Mittag-Leffler law of parameter $\beta.$ Observe also that if $\beta\in]0,1[,$ $\tau(t)$ bears the same asymptotic behaviour  as the first passage time for $e^{\alpha\xi},$ above $t,$ $L_{\log(t)/\alpha}=\inf\{s\geq 0, e^{\alpha \xi_{s}}>t\}.$ Indeed, it is known that under the present assumptions the process $\{t\xi_{u/\phi(1/t)}, u\geq 0\}$ converges, in Skorohod's topology, as $t\to\infty,$  towards a stable subordinator of parameter $\beta,$ say $\{\widetilde{\xi}_{t}, t\geq 0\}.$ This implies that $\phi(1/s)L_{s}$ converges weakly to the first passage time above the level $1$ for $\widetilde{\xi},$ and the latter follows a Mittag-Leffler law of parameter $\beta\in]0,1[.$  Which plainly justifies our assertion owing to Proposition \ref{DKprop}  and the fact that $\phi(1/\log(t))L_{\log(t)/\alpha}$ converges weakly towards a r.v. $\alpha^{-\beta}\widetilde{W},$ where $\widetilde{W}$ follows a Mittag-Leffler law of parameter $\beta.$ 

Besides, we can obtain further information about the rate of growth of $\xi$ when evaluated in stopping times of the form $\tau.$ It is known that if the $\phi$ is regularly varying with an index $\beta\in]0,1[,$ then $$\liminf_{t\to\infty}\frac{\xi_{t}}{g(t)}=\beta(1-\beta)^{(1-\beta)/\beta},\qquad \pr-\text{a.s.},$$ where the function $g$ is defined in Theorem \ref{th:2}. While the just cited Theorem states that $$\liminf_{t\to\infty}\frac{\xi_{\tau(t)}}{\log(t)}=\frac{1}{\alpha},\qquad \pr-\text{a.s.}$$ These together with the fact that $\lim_{t\to\infty}\frac{\log(t)}{g(t)}=0,$ confirms that the rate of growth of $\xi_{\tau(\cdot)}$ is slower than that of $\xi$, but this time using a.s. convergence. The long time behaviour of $\log(t)/g(\tau(t))$ is studied in the Proof of Theorem \ref{th:2}. The results on the upper envelop of $\xi$ and that of $\xi_{\tau}$ can be discussed in a similar way. We omit the details.
\end{section}

\gracias We would like to thank B. Haas for suggesting us the application of our results to self-similar fragmentations and insightful discussions about the topic. This research was funded by CONCyTEG (Council of Science and Technology of the state of Guanajuato, M\'exico) and partially by the project PAPIITT-IN120605, UNAM. 
\bibliographystyle{abbrv}
\bibliography{imateptbib}

\begin{thebibliography}{10}

\bibitem{Aaronson}
J.~Aaronson.
\newblock {\em An introduction to infinite ergodic theory}, volume~50 of {\em
  Mathematical Surveys and Monographs}.
\newblock American Mathematical Society, Providence, RI, 1997.

\bibitem{bertoin1995}
J.~Bertoin.
\newblock Sample path behaviour in connection with generalized arcsine laws.
\newblock {\em Probab. Theory Related Fields}, 103(3):317--327, 1995.

\bibitem{bertoinbook}
J.~Bertoin.
\newblock {\em L\'evy processes}, volume 121 of {\em Cambridge Tracts in
  Mathematics}.
\newblock Cambridge University Press, Cambridge, 1996.

\bibitem{bertoinsub}
J.~Bertoin.
\newblock Subordinators: examples and applications.
\newblock In {\em Lectures on probability theory and statistics (Saint-Flour,
  1997)}, volume 1717 of {\em Lecture Notes in Math.}, pages 1--91. Springer,
  Berlin, 1999.

\bibitem{bertoin2002}
J.~Bertoin.
\newblock Self-similar fragmentations.
\newblock {\em Ann. Inst. H. Poincar\'e Probab. Statist.}, 38(3):319--340,
  2002.

\bibitem{beasymp}
J.~Bertoin.
\newblock The asymptotic behavior of fragmentation processes.
\newblock {\em J. Eur. Math. Soc. (JEMS)}, 5(4):395--416, 2003.

\bibitem{bertoinFC}
J.~Bertoin.
\newblock {\em Random Fragmentation and Coagulation Processes}, volume 102 of
  {\em Cambridge Studies in Advanced Mathematics}.
\newblock Cambridge University Press, Cambridge, 2006.
\newblock To appear.

\bibitem{BC2002}
J.~Bertoin and M.-E. Caballero.
\newblock Entrance from {$0+$} for increasing semi-stable {M}arkov processes.
\newblock {\em Bernoulli}, 8(2):195--205, 2002.

\bibitem{BYfactorizations}
J.~Bertoin and M.~Yor.
\newblock On subordinators, self-similar {M}arkov processes and some
  factorizations of the exponential variable.
\newblock {\em Electron. Comm. Probab.}, 6:95--106 (electronic), 2001.

\bibitem{bysurvey}
J.~Bertoin and M.~Yor.
\newblock Exponential functionals of {L}{\'e}vy processes.
\newblock {\em Probability Surveys}, 2:191--212, 2005.

\bibitem{BGT}
N.~H. Bingham, C.~M. Goldie, and J.~L. Teugels.
\newblock {\em Regular variation}, volume~27 of {\em Encyclopedia of
  Mathematics and its Applications}.
\newblock Cambridge University Press, Cambridge, 1989.

\bibitem{Brennandurrett2}
M.~D. Brennan and R.~Durrett.
\newblock Splitting intervals.
\newblock {\em Ann. Probab.}, 14(3):1024--1036, 1986.

\bibitem{Brennandurrett1}
M.~D. Brennan and R.~Durrett.
\newblock Splitting intervals. {II}. {L}imit laws for lengths.
\newblock {\em Probab. Theory Related Fields}, 75(1):109--127, 1987.

\bibitem{cpy97}
P.~Carmona, F.~Petit, and M.~Yor.
\newblock On the distribution and asymptotic results for exponential
  functionals of {L}\'evy processes.
\newblock In {\em Exponential functionals and principal values related to
  Brownian motion}, Bibl. Rev. Mat. Iberoamericana, pages 73--130. Rev. Mat.
  Iberoamericana, Madrid, 1997.

\bibitem{CP}
L.~Chaumont and J.~C. Pardo.
\newblock The lower envelope of positive self-similar {M}arkov processes.
\newblock {\em Electron. J. Probab.}, 11:no. 49, 1321--1341 (electronic), 2006.

\bibitem{csakifoldes}
E.~Cs{\'a}ki and A.~F{\"o}ldes.
\newblock On two ergodic properties of self-similar processes.
\newblock In {\em Asymptotic methods in probability and statistics (Ottawa, ON,
  1997)}, pages 97--111. North-Holland, Amsterdam, 1998.

\bibitem{Fitzsimmonsrecext}
P.~Fitzsimmons.
\newblock On the existence of recurrent extensions of self-similar markov
  processes.
\newblock {\em Electronic Communications in Probability}, 11:230--241, 2006.

\bibitem{hawkes}
J.~Hawkes.
\newblock On the potential theory of subordinators.
\newblock {\em Z. Warscheinlichkeitstheorie und Verw. Gebiete}, 33(2):113--132,
  1975/76.

\bibitem{lamperti2}
J.~Lamperti.
\newblock Semi-stable {M}arkov processes. {I}.
\newblock {\em Z. Wahrscheinlichkeitstheorie und Verw. Gebiete}, 22:205--225,
  1972.

\bibitem{rivero2003}
V.~Rivero.
\newblock A law of iterated logarithm for increasing self--similar {M}arkov
  processes.
\newblock {\em Stochastics and Stochastics Reports}, 75(6):443--462, 2003.

\bibitem{rosinski2007}
J.~Rosi{\'n}ski.
\newblock Tempering stable processes.
\newblock {\em Stochastic Process. Appl.}, 117(6):677--707, 2007.

\bibitem{SV}
R.~Song and Z.~Vondra{\v{c}}ek.
\newblock Potential theory of special subordinators and subordinate killed
  stable processes.
\newblock {\em J. Theoret. Probab.}, 19(4):817--847, 2006.

\end{thebibliography}
\begin{flushleft}Instituto de Matem\'aticas\\ 
Universidad Nacional Aut\'onoma de M\'exico\\
Circuito Exterior, CU\\ 
04510 M\'exico, D.F.\\
M\'exico\\
\hspace{2cm}\\

Centro de Investigaci\'on en Matem\'aticas A.C.\\
Calle Jalisco s/n\\
Col. Valenciana\\
36240 Guanajuato, Gto.\\
M\'exico.\\
\end{flushleft}
\end{document}